\providecommand{\U}[1]{\protect\rule{.1in}{.1in}}
\newtheorem{theorem}{Theorem}
\newtheorem{corollary}[theorem]{Corollary}
\newtheorem{definition}[theorem]{Definition}
\newtheorem{lemma}[theorem]{Lemma}
\newtheorem{proposition}[theorem]{Proposition}
\newtheorem{remark}[theorem]{Remark}
\newenvironment{proof}[1][Proof]{\textbf{#1.} }{\ \rule{0.5em}{0.5em}}
\begin{document}

\begin{center}
{\LARGE Properly outer and strictly outer actions of finite groups on prime
C*-algebras}

\bigskip

\bigskip

Costel Peligrad
\end{center}

\bigskip

Department of Mathematical Sciences, University of Cincinnati, PO Box 210025,
Cincinnati, OH 45221-0025, USA. E-mail address: costel.peligrad@uc.edu

Key words and phrases. dynamical system, algebra of local multipliers,
strictly outer actions.

2020 Mathematics Subject Classification. Primary 46L55, 46L40; Secondary 20F29.

\bigskip

\textbf{ABSTRACT. An action of a compact, in particular finite group on a
C*-algebra is called properly outer if no automorphism of the group that is
distinct from identity is implemented by a unitary element of the algebra of
local multipliers of the C*-algebra. In this paper I define the notion of
strictly outer action (similar to the definition for von Neumann factors in
[11]) and prove that for finite groups and prime C*-algebras, it is equivalent
to the proper outerness of the action. For finite abelian groups this is
equivalent to other relevant properties of the action. }

\bigskip

\textbf{ }

\section{Introduction}

\bigskip

An action $\alpha$ of a locally compact group or a quantum group, $G,$ on a
von Neumann factor $M$ \ is said to be strictly outer when the relative
commutant of the factor in the crossed product is trivial. The action is
called minimal if the inclusion $M^{\alpha}\subset M$ is irreducible, that is
the relative commutant $(M^{\alpha})^{\prime}\cap M$\ of the fixed point
algebra $M^{\alpha}$\ in $M$\ is trivial. For integrable actions of quantum
groups on factors an action is minimal if and only if it is strictly outer
[11]. It can be shown that, in particular, for finite groups, an action
$\alpha$\ on a factor $M$ is strictly outer if and only if it is properly
outer (that is, no $\alpha_{g},g\neq e$\ is implemented by a unitary element
of $M$). In the next Section, I define strictly outer actions of finite groups
on C*-algebras and, in Section 3.1., prove that an action of a finite group,
$G,$ on a prime C*-algebra, $A,$ is strictly outer if and only if it is
properly outer (that is, no $\alpha_{g},g\neq e,$\ is implemented by a unitary
element in the algebra of local multipliers of $A$). In Section 3.2. I study
actions of finite abelian groups on prime C*-algebras and state some
equivalent conditions to proper and therefore strict outerness. These results
extend and complement some known results in the litterature ([4], [5], [7]).
In [2] similar results were obtained for compact abelian group actions on
separable prime C*-algebras with an additional condition. I do not require
separability of C*-algebras.

\bigskip

\section{Definitions and preliminary results}

If $A$\ is a C*-algebra, denote by $\mathcal{I}$\ the set of all closed
essential, two sided ideals of $A$. A two sided ideal $I\subset A$\ is called
essential if $\left\{  a\in A:aI=(0)\right\}  =(0).$ If $I\subset J$\ are two
such ideals then according to [7, Proposition 3.12.8.] we have the inclusion
$M(J)\subset M(I)$\ where $M(I),$\ respectively $M(J)$ are the multiplier
algebras of $I,$\ respectively $J.$\ The inductive limit of the system
$\left\{  M(I):I\in\mathcal{I}\right\}  $\ is called the algebra of local
multipliers of $A$\ and denoted $M_{loc}(A)$ [1]$.$ Originally, this algebra
was denoted $M^{\infty}(A)$\ and called the algebra of essential multipliers
of $A$\ [6]. Recall that a C*-algebra, $A,$ is called prime if every (two
sided) ideal of $A$\ is essential.

In the next lemma I collect some known results

\begin{lemma}
Let $A$\ be a C*-algebra. The following properties are equivalent\newline a)
$A$\ is prime.\newline b) The center of $M_{loc}(A)$\ consists of
scalars.\newline c) If $B$\ is a C*-subalgebra of $M_{loc}(A)$\ such that
$A\subset B\subset M_{loc}(A),$ then the center of $B$\ consists of scalars.
\end{lemma}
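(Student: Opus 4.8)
The plan is to isolate a single technical fact and then read off all of the implications from it. The fact is that the relative commutant of $A$ inside $M_{loc}(A)$ coincides with the center $Z(M_{loc}(A))$; once this is available, the equivalences are short. I would state and prove it first as a sub-claim: if $z\in M_{loc}(A)$ satisfies $za=az$ for all $a\in A$, then $z\in Z(M_{loc}(A))$.

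To prove the sub-claim I would use that every element of $M_{loc}(A)$ lies in $M(I)$ for some $I\in\mathcal I$, together with the fact that the intersection of two members of $\mathcal I$ is again in $\mathcal I$. Given such a $z$ commuting with $A$ and an arbitrary $m\in M_{loc}(A)$, I would choose $K\in\mathcal I$ with $z,m\in M(K)$ and verify $(zm-mz)k=0$ for every $k\in K$ via the chain $(zm)k=z(mk)=(mk)z=m(kz)=m(zk)=(mz)k$, which uses only that $mk\in K\subseteq A$ and that $z$ commutes with elements of $K$. Since $K$ is essential in $M(K)$, the vanishing of $(zm-mz)K$ forces $zm=mz$. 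This essentiality-driven annihilator step is where the real content sits, and I expect it to be the main obstacle: everything else reduces to it.

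With the sub-claim, (b)$\Leftrightarrow$(c) is immediate. For (c)$\Rightarrow$(b) take $B=M_{loc}(A)$; for (b)$\Rightarrow$(c), any $z\in Z(B)$ commutes with $A\subseteq B$, hence lies in $Z(M_{loc}(A))=\mathbb C$ by the sub-claim, so $Z(B)$ is scalar. For (a)$\Rightarrow$(b) I would argue by contradiction: a non-scalar self-adjoint $z\in Z(M_{loc}(A))$ has at least two spectral values, so continuous functional calculus produces nonzero central elements $f_1,f_2$ with $f_1f_2=0$; picking $K\in\mathcal I$ with $f_1,f_2\in M(K)$, the closed two-sided ideals of $A$ generated by $f_1K$ and $f_2K$ are nonzero (by essentiality of $K$) yet annihilate one another, since $f_1f_2=0$ and the $f_i$ are central, so $x f_1 k y u f_2 k' v = x f_1 f_2 (kyu)k' v = 0$. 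This contradicts primeness.

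Finally, for (b)$\Rightarrow$(a) I would prove the contrapositive. If $A$ is not prime there is a nonzero closed ideal $I$ whose annihilator $I^{\perp}$ is nonzero; then $I\cap I^{\perp}=0$, the sum $K:=I\oplus I^{\perp}$ is closed and essential, hence $K\in\mathcal I$. The multiplier $m\in M(K)$ acting as the identity on $I$ and as $0$ on $I^{\perp}$ is a nontrivial projection, and checking that $I$ and $I^{\perp}$ are invariant under $M(K)$ shows $m$ is central in $M(K)$, so $m$ commutes with $A\subseteq M(A)\subseteq M(K)$; by the sub-claim $m\in Z(M_{loc}(A))$. Since $m\neq 0,1$, the center is not reduced to scalars, which is the desired contrapositive. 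Thus the whole lemma rests on the commutant-equals-center identity, with the remaining work being the standard functional-calculus and annihilator manipulations.
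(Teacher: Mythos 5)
Your overall architecture — reduce everything to the identity $A'\cap M_{loc}(A)=Z(M_{loc}(A))$ — is sound and in fact mirrors the paper, whose proof is by citation: the commutant-equals-center fact is essentially [1, Lemma 3.2.2] (used there for b)$\Rightarrow$c)), and a)$\Leftrightarrow$b) is quoted from [2, Proposition 3.1] and [6]. But your execution has a genuine gap, and it is structural: you assert that every element of $M_{loc}(A)$ lies in $M(I)$ for some $I\in\mathcal{I}$. This is false. $M_{loc}(A)$ is the C*-inductive limit of $\left\{ M(I):I\in\mathcal{I}\right\}$, i.e., the \emph{norm completion} of the directed union $\bigcup_{I\in\mathcal{I}}M(I)$; that union is a dense $*$-subalgebra but in general a proper one (it is everything when, say, $A$ is simple, since then $M_{loc}(A)=M(A)$, but not for general $A$, e.g. $A=C_{0}(\mathbb{R})$). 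This false claim is load-bearing twice. In the sub-claim you choose $K$ with \emph{both} $z,m\in M(K)$, so that $zm-mz\in M(K)$ and essentiality of $K$ in $M(K)$ finishes; without it, $zm-mz$ is merely an element of $M_{loc}(A)$ annihilated by $K$, and you need the stronger fact that an essential ideal of $A$ has zero annihilator in all of $M_{loc}(A)$ — this is [1, Lemma 2.3.3 ii)], exactly what the paper invokes in its Lemma 3, and it is the ``real content'' you correctly identified but then justified circularly. The sub-claim itself is true and repairable: it suffices to take $m$ in the dense union (then extend to all of $M_{loc}(A)$ by continuity of multiplication), and to handle the annihilator step either quote [1, Lemma 2.3.3 ii)] or approximate $z$ by $z_{n}\in M(I_{n})$ and use that $\left\Vert x\right\Vert =\sup\left\{ \left\Vert xk\right\Vert :k\in K,\left\Vert k\right\Vert \leq1\right\}$ for $x\in M(K)$.

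The second occurrence is worse. In a)$\Rightarrow$b), the orthogonal central elements $f_{1},f_{2}$ produced by functional calculus from a non-scalar $z\in Z(M_{loc}(A))$ need not lie in any $M(K)$, so $f_{i}K$ need not be contained in $A$, and ``the closed two-sided ideals of $A$ generated by $f_{1}K$ and $f_{2}K$'' are not defined as ideals of $A$. The natural substitutes $\overline{f_{i}M_{loc}(A)}\cap A$ are indeed orthogonal closed ideals of $A$, but proving they are \emph{nonzero} requires an essentiality statement for $A$ inside $M_{loc}(A)$ (every nonzero closed ideal of $M_{loc}(A)$ meets $A$ nontrivially) that you have not established and that does not follow from your sub-claim. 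This implication is precisely where the hard work of the lemma sits, and it is the part the paper does not reprove but cites from [2] and [6]. Your b)$\Leftrightarrow$c) and your b)$\Rightarrow$a) (via $K=I\oplus I^{\perp}$ and the central projection multiplier) are correct modulo the repaired sub-claim; a)$\Rightarrow$b) as written does not close.
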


\begin{proof}
The equivalence of a) and b) follows from [2, Proposition 3.1.] (see also [6,
page 303, statements 6), 4) and 5)]). The implication b)$\Rightarrow$c)
follows from [1, Lemma 3.2.2.]. and the implication c)$\Rightarrow$b) is obvious.
\end{proof}

\bigskip

In the rest of this paper we will consider a C*-dynamical system
$(A,G,\alpha)$\ where $A$\ is a C*-algebra, $G$\ a finite (or compact) group
and $\alpha$\ a faithful action of $G$\ on $A,$\ that is for each $g\in
G,$\ $\alpha_{g}$\ is an automorphism of $A$\ such that $\alpha_{g_{1}g_{2}%
}=\alpha_{g_{1}}\alpha_{g_{2}}$ for all $g_{1},g_{2}\in G.$ We wil denote by
$A^{\alpha}=\left\{  a\in A:\alpha_{g}(a)=a,g\in G\right\}  $\ the fixed point
algebra of the action. The action $\alpha$\ is called faithful if $\alpha
_{g}\neq id$ for $g\neq e$\ where $e$\ is the unit of $G.$ If the group $G$ is
finite, the action $\alpha$ of $G$ on\ $A$\ has a natural extension to
$M_{loc}(A).$

\begin{lemma}
\bigskip Suppose $G$\ is a compact group. Then,\newline a) Every approximate
identity of $A^{\alpha}$\ is an approximate identity of $A,$\newline b)
$M(A)^{\alpha}=M(A^{\alpha}).$
\end{lemma}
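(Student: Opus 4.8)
The plan is to prove the two statements separately, using the averaging operator furnished by the compact group structure.

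For part a), the key tool is the conditional expectation $E:A\to A^{\alpha}$ given by averaging over the group with respect to normalized Haar measure, $E(a)=\int_G \alpha_g(a)\,dg$. This is a positive, faithful, contractive projection onto $A^{\alpha}$. Let $(e_\lambda)$ be an approximate identity for $A^{\alpha}$. First I would show $\|a-e_\lambda a\|\to 0$ for $a\in A$. The natural strategy is to reduce to positive elements and exploit the fact that $E$ is faithful and $A^{\alpha}$-bimodular: for $a\ge 0$ one has $\|a^{1/2}-e_\lambda a^{1/2}\|^2 = \|(1-e_\lambda)a(1-e_\lambda)\|$ (formally, since there may be no unit), and applying $E$ one can control $E\big((1-e_\lambda)a(1-e_\lambda)\big)=(1-e_\lambda)E(a)(1-e_\lambda)\to 0$ because $E(a)\in A^{\alpha}$ and $(e_\lambda)$ is an approximate identity there. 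The main obstacle here is the absence of a unit, so I would phrase everything through inequalities of the form $E(x^*x)=0\Rightarrow x=0$ (faithfulness of $E$) together with the Cauchy–Schwarz inequality $E(a^*x)^*E(a^*x)\le \|x\|^2 E(a^*a)$ for the bimodule map $E$, rather than writing symbols like $1-e_\lambda$ literally.

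For part b), I would prove the two inclusions $M(A^{\alpha})\subset M(A)^{\alpha}$ and $M(A)^{\alpha}\subset M(A^{\alpha})$. Using part a), an element $m\in M(A^{\alpha})$ acts on $A^{\alpha}$, and I would extend its action to $A$ by defining $m\cdot a := \lim_\lambda m e_\lambda a$ (and symmetrically on the right), showing the limit exists and defines a double centralizer of $A$ using that $(e_\lambda)$ is an approximate identity of all of $A$; invariance under $\alpha$ follows because $m$ commutes with the $\alpha$-invariant structure, giving $m\in M(A)^{\alpha}$. Conversely, if $m\in M(A)^{\alpha}$, then for $a\in A^{\alpha}$ one checks $\alpha_g(ma)=\alpha_g(m)\alpha_g(a)=ma$, so $ma\in A^{\alpha}$, and likewise $am\in A^{\alpha}$, so $m$ restricts to a multiplier of $A^{\alpha}$; faithfulness of this restriction map uses again that $A^{\alpha}$ generates enough of $A$ via part a).

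The step I expect to be the main obstacle is verifying that the extension in part b) is well defined and multiplicative in the non-unital setting — that is, checking $m\cdot(ab)=(m\cdot a)b$ and that the left and right actions are compatible as a genuine double centralizer. I would handle this by systematically replacing any would-be unit with the approximate identity $(e_\lambda)$ and passing to limits, invoking part a) at each stage to guarantee convergence on all of $A$ rather than only on $A^{\alpha}$. Once the extension is known to land in $M(A)$, its $\alpha$-invariance and the fact that it restricts back to the original multiplier on $A^{\alpha}$ are then routine.
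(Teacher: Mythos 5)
Your part b) is essentially the paper's own argument: restriction of an invariant multiplier gives $M(A)^{\alpha}\subset M(A^{\alpha})$, and part a) lets you extend a multiplier $m$ of $A^{\alpha}$ to a multiplier of $A$ via the approximate identity, the extension being automatically $\alpha$-invariant because each $me_{\lambda}$ lies in $A^{\alpha}$; this yields $M(A^{\alpha})\subset M(A)^{\alpha}$. That half is correct. Part a), however, contains a genuine gap. You correctly reduce to showing $\|(1-e_{\lambda})a(1-e_{\lambda})\|\to 0$ for $a\geq 0$, and you correctly compute $E\bigl((1-e_{\lambda})a(1-e_{\lambda})\bigr)=(1-e_{\lambda})E(a)(1-e_{\lambda})\to 0$. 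But the passage from $E(x_{\lambda})\to 0$ to $x_{\lambda}\to 0$ is precisely the point at issue, and neither faithfulness of $E$ nor the module Cauchy--Schwarz inequality delivers it: faithfulness is a statement about a single element being exactly zero, and both tools you cite give \emph{upper} bounds involving $E$, whereas you need a \emph{lower} bound of the form $\|E(x)\|\geq c\|x\|$ on positive elements. No such bound exists for an infinite compact group: for the rotation action of $\mathbb{T}$ on $C(\mathbb{T})$, where $E(f)=\bigl(\int f\bigr)1$, there are positive norm-one functions of arbitrarily small mean, so a bounded net of positive elements can satisfy $E(x_{\lambda})\to 0$ while $\|x_{\lambda}\|=1$ throughout. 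Since you give no argument special to the particular net $(1-e_{\lambda})a(1-e_{\lambda})$, the inference is unjustified.

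The gap closes easily when $G$ is finite: then $x\leq |G|\,E(x)$ for every $x\geq 0$ (because $E(x)=\frac{1}{|G|}\sum_{g}\alpha_{g}(x)\geq\frac{1}{|G|}x$), hence $\|(1-e_{\lambda})a(1-e_{\lambda})\|\leq |G|\,\|(1-e_{\lambda})E(a)(1-e_{\lambda})\|\to 0$, and your argument goes through verbatim. But the lemma is stated for compact $G$, where pointwise domination of $a$ by a multiple of $E(a)$ fails and a different idea is required; one route is to show that every $a\geq 0$ lies in the hereditary subalgebra $\overline{E(a)AE(a)}$ (any state annihilating this subalgebra kills $E(a)$, hence kills $\int\phi(\alpha_{g}(a))\,dg$, hence kills $a$ by positivity and norm-continuity of $g\mapsto\alpha_{g}(a)$), after which any approximate identity of $A^{\alpha}$ acts as one on $a$. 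The paper itself does not argue this at all: it quotes [3, Lemma 2.7], which proves a) in the more general setting of compact quantum group actions, so your attempt to supply a self-contained proof is a genuinely different route, but as written it is only valid for finite groups once repaired, not for the compact case the statement asserts.
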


\begin{proof}
\bigskip a) follows from the more general case of compact quantum group
actions [3, Lemma 2.7.]. To prove b), notice first that $M(A)^{\alpha}\subset
M(A^{\alpha}),$ then, using a) it follows that $M(A^{\alpha})\subset M(A),$ so
$M(A^{\alpha})\subset M(A)^{\alpha}$.
\end{proof}

\begin{lemma}
Let $(A,G,\alpha)$\ be a C*-dynamical system with $G$\ \ finite and
$A$\ prime. Then, \newline a) $M_{loc}(A)^{\alpha}\subset M_{loc}(A^{\alpha
}).$\newline b) If $x\in M_{loc}(A)$ commutes with $A^{\alpha}$\ then
$x$\ commutes with $M_{loc}(A)^{\alpha}.$\newline c) If $A^{\alpha}$\ is
prime, then the center of $M_{loc}(A)^{\alpha}$\ consists of scalars.\newline
d) $A^{\prime}\cap M_{loc}(A)$ consists of scalars.
\end{lemma}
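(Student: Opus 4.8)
The plan is to fix once and for all the faithful conditional expectation $E=\frac{1}{|G|}\sum_{g\in G}\alpha_g\colon A\to A^{\alpha}$ and to prove a) first, since c) and d) will follow from it together with Lemma 1, and b) is the genuine difficulty. For a), take $x\in M_{loc}(A)^{\alpha}$, choose an essential ideal with $x\in M(I)$, and replace $I$ by the invariant essential ideal $\bigcap_{g\in G}\alpha_{g}(I)$ (a finite intersection of essential ideals is essential), so that $x\in M(I)^{\alpha}$. The key point is that $I^{\alpha}=I\cap A^{\alpha}=E(I)$ is essential in $A^{\alpha}$: if $a\in A^{\alpha}$ satisfies $aI^{\alpha}=0$, put $p=a^{*}a\in A^{\alpha}$; for $b\in I$ one has $E(pb)=pE(b)=0$ because $E(b)\in I^{\alpha}$, and applying this to $b=e_{\lambda}p$ for an approximate unit $(e_{\lambda})$ of $I$ gives $E(pe_{\lambda}p)=0$. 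Since $e_{\lambda}\to 1$ strictly in $M(I)$ and $E$ is strictly continuous there, $p^{2}=E(p^{2})=\lim_{\lambda}E(pe_{\lambda}p)=0$, whence $a=0$. Now Lemma 2 b), applied to the C*-dynamical system $(I,G,\alpha)$, yields $M(I)^{\alpha}=M(I^{\alpha})$, so $x\in M(I^{\alpha})\subseteq M_{loc}(A^{\alpha})$; passing to the inductive limit proves a).

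Part d) is then direct and does not use b). Let $x\in A'\cap M_{loc}(A)$, let $w\in M(I)$ with $I$ essential in $A$, and let $i\in I\subseteq A$. Since $x$ commutes with $i$ and with $wi\in A$, a short computation gives $[x,w]i=xwi-wxi=x(wi)-w(xi)=(wi)x-w(ix)=0$, and symmetrically $i[x,w]=0$. Hence $[x,w]I=I[x,w]=0$; because $I$ is an essential two-sided ideal of $A$ this forces $[x,w]=0$ in $M_{loc}(A)$. Thus $A'\cap M_{loc}(A)\subseteq Z(M_{loc}(A))$, which consists of scalars by Lemma 1 b) since $A$ is prime. Part c) follows from a): by a) we have the inclusions $A^{\alpha}\subseteq M_{loc}(A)^{\alpha}\subseteq M_{loc}(A^{\alpha})$, so applying Lemma 1 c) with the prime algebra $A^{\alpha}$ in the role of $A$ and $B=M_{loc}(A)^{\alpha}$ shows that the center of $M_{loc}(A)^{\alpha}$ consists of scalars.

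For b), let $x\in M_{loc}(A)$ commute with $A^{\alpha}$ and let $y\in M_{loc}(A)^{\alpha}$. By a), $y\in M(J)$ for some essential ideal $J$ of $A^{\alpha}$. Writing $z=[x,y]$, the crucial commutation identity is obtained as follows: for $d\in J$ one has $yd\in J\subseteq A^{\alpha}$, so $x(yd)=(yd)x$ and $xd=dx$, giving $zd=xyd-yxd=ydx-ydx=0$; symmetrically $dz=0$. Hence $zJ=Jz=0$. To reduce the non-fixed part of $x$, note that each $\alpha_{g}(x)$ again commutes with $A^{\alpha}$, so $E_{loc}(x)=\frac{1}{|G|}\sum_{g}\alpha_{g}(x)$ lies in $M_{loc}(A)^{\alpha}\cap(A^{\alpha})'\subseteq Z(M_{loc}(A^{\alpha}))$ by a) and the argument of d) applied to $A^{\alpha}$; therefore $[E_{loc}(x),y]=0$ and one may assume $E_{loc}(x)=0$. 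A useful further device is to split $x$ into its isotypic components $x_{\pi}$ under $\alpha$: since $A^{\alpha}$ is fixed, comparing isotypes in $[x,d]=0$ shows each $x_{\pi}$ separately commutes with $A^{\alpha}$, so it suffices to treat $x$ homogeneous of nontrivial type.

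The main obstacle is precisely the passage from $zJ=Jz=0$ to $z=0$. This cannot be done by essentiality of $J$ in $A^{\alpha}$ alone, because $J$ is not an ideal of $A$: one only obtains that $z$ annihilates the right ideal $\overline{JA}$ and that $\overline{AJ}$ annihilates $z$, and in a prime algebra the one-sided annihilator of a proper one-sided ideal can be nonzero, so these do not force $z=0$. What must be exploited is the coherence between the left and right conditions coming from the single operator $x$, together with the primeness of $A$ and the fact that $E$ has finite index (so that $A$ is finitely generated over $A^{\alpha}$). I expect to propagate the annihilation of $J$ to the genuine essential ideal $\overline{AJA}$ of $A$ using a quasi-basis for $E$ adapted to the isotypic decomposition, thereby obtaining $z\overline{AJA}=\overline{AJA}\,z=0$ and hence $z=0$; establishing this propagation, rather than the bookkeeping around it, is the hard part of the argument.
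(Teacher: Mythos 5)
Your parts a), c) and d) are essentially correct. Parts a) and c) follow the same route as the paper (reduce to $\alpha$-invariant essential ideals $I=\bigcap_{g}\alpha_{g}(J)$, note $I^{\alpha}$ is essential in $A^{\alpha}$, and apply Lemma 2 b) resp.\ Lemma 1 c)); your d) is a correct direct argument, whereas the paper deduces d) from b) with trivial $\alpha$ --- but that is the same commutator computation, so nothing is lost, and your d) does not depend on your b). The problem is part b), which you yourself call the genuine difficulty: you stop at $zJ=Jz=0$ for an essential ideal $J$ of $A^{\alpha}$, correctly observe that this does not force $z=0$ because $J$ is not an ideal of $A$, and then only sketch a hoped-for ``propagation'' via quasi-bases and finite index, which you never carry out. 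That is a genuine gap, and the averaging ($E_{loc}$) and isotypic reductions you introduce do not close it.

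The missing idea is a sharper choice of the ideal, not a new propagation device. Do not use part a) to place $y\in M(J)$ for an \emph{arbitrary} essential ideal $J$ of $A^{\alpha}$. Instead, use the description already established in the proof of a): $M_{loc}(A)^{\alpha}$ is the inductive limit of $\left\{ M(I)^{\alpha}:I\ \alpha\text{-invariant essential ideal of }A\right\}$, and by Lemma 2 b) applied to $I$ one has $M(I)^{\alpha}=M(I^{\alpha})$. So one may take $J=I^{\alpha}$, i.e.\ $y=m\in M(I^{\alpha})$ with $I$ an invariant essential ideal of \emph{$A$}. Your commutator computation then gives $I^{\alpha}(xm-mx)=0$ (one-sided annihilation suffices). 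Now the propagation you were looking for is exactly Lemma 2 a) applied to $I$: an approximate identity $(e_{\lambda})$ of $I^{\alpha}$ is an approximate identity of $I$, hence for $i\in I$ we get $i(xm-mx)=\lim_{\lambda}(ie_{\lambda})(xm-mx)=0$, so $I(xm-mx)=0$; since $I$ is essential in $A$, the annihilator lemma [1, Lemma 2.3.3 ii)] yields $xm-mx=0$. As this holds for every such $I$ and every $m\in M(I)^{\alpha}$, $x$ commutes with $M_{loc}(A)^{\alpha}$. In short: the whole point of the paper's proof is that the ideal of $A^{\alpha}$ being annihilated is the fixed part $I^{\alpha}$ of an invariant essential ideal of $A$, so that Lemma 2 a) transfers the annihilation from $I^{\alpha}$ to $I$; no quasi-basis or finite-index machinery is needed, and your homogeneous-component bookkeeping becomes superfluous.
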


\begin{proof}
a) Since $G$\ is finite, every essential ideal $J\subset A$\ contains the
$\alpha$-invariant essential ideal $I=\cap_{g\in G}\alpha_{g}(J).$ Therefore,
$M_{loc}(A)$\ is the inductive limit of $\left\{  M(I):I\subset A\text{
}\alpha\text{-invariant essential ideal}\right\}  .$If $I\subset A$\ is a
closed two sided $\alpha$-invariant essential ideal of $A,$\ it is obvious
that $I^{\alpha}$\ is an essential ideal of $A^{\alpha}$. From Lemma 2 b)
applied to $I$\ instead of $A$\ it follows that $M(I)^{\alpha}=M(I^{\alpha
})\subset M_{loc}(A^{\alpha})$ and, therefore, $M_{loc}(A)^{\alpha}\subset
M_{loc}(A^{\alpha}).$\newline To prove b), let $I\subset A$\ \ be an $\alpha
$-invariant essential ideal. Then $I^{\alpha}$\ is ideal of $A^{\alpha}$\ so
$x$\ commutes with $I^{\alpha}.$ If $m\in M(I^{\alpha})$ and $a\in I^{\alpha}%
$\ we have%
\[
xam=amx.
\]
and%
\[
xam=axm.
\]
Therefore%
\[
a(xm-mx)=0.
\]
so $I^{\alpha}(xm-mx)=\left\{  0\right\}  .$ Applying Lemma 2 a) to
$I$\ instead of $A$,\ it follows that $I(xm-mx)=\left\{  0\right\}  .$ From
[1, \ Lemma 2.3.3.ii)] we get $xm-mx=0.$ Applying Lemma 2 b) to $I,$ we
have\ $M(I^{\alpha})=M(I)^{\alpha},$ so $x$\ commutes with $M(I)^{\alpha}$ for
every $\alpha$-invariant essential ideal of $A.$ Therefore, $x$\ commutes with
$\underline{lim}_{I}M(I)^{\alpha}=M_{loc}(A)^{\alpha}.$\newline c) If
$A^{\alpha}$\ is prime, from Lemma 1 b) applied to $A^{\alpha}$\ instead of
$A$\ it follows that the center of $M_{loc}(A^{\alpha})$\ consists of scalars.
Applying a) it follows that $M_{loc}(A)^{\alpha}\subset M_{loc}(A^{\alpha}).$
From Lemma 1 c) applied to $A^{\alpha}$\ instead of $A$\ it follows that the
center of $M_{loc}(A)^{\alpha}$\ consists of scalars.\newline d) Follows from
part b) for trivial $\alpha$.
\end{proof}

\bigskip

The cross product $A\times_{\alpha}G$ of $A$\ by a finite group, $G,$\ is
defined as the C*-completion of the algebra of all functions $f:G\rightarrow
A$ with the following operations:%
\[
(f\cdot h)(g)=\sum_{p\in G}f(p)\alpha_{p}(h(p^{-1}g).
\]%
\[
f^{\ast}(g)=\alpha_{g}(f(g^{-1})^{\ast}.
\]
As $G$\ is a finite group, each $f\in A\times_{\alpha}G$\ can be written
$f=\sum a_{g}\delta_{g}$\ where $a_{g}\in A$\ and $\delta_{g}$\ is the
function on $G$ with values in the multiplier algebra $M(A)$\ of $A$\ such
that $\delta_{g}(g)=1$\ and $\delta_{g}(p)=0$\ for $p\neq g.$

If $A$\ is a C*-algebra then, $A\subset A^{\prime\prime}\subset B(H_{u}%
)$\ where $A^{^{\prime\prime}}$\ is enveloping von Neumann algebra of $A$\ in
$B(H_{u})$ and $H_{u}$ is the Hilbert space of the universal representation of
$A$ [7, Section 3.7.]$.$Denote by $id$ the identity representation of $A\ $in
$B(H_{u}).$ If $G$\ is a finite group, denote by $\lambda$\ the left regular
representation of $G$ on $l^{2}(G)$. Denote by ($\widetilde{id},\lambda
,l^{2}(G,H_{u})),$the covariant representation of $(A,G,\alpha)$ on
$l^{2}(G,H_{u}),$ where%
\[
\widetilde{id}(a)(h)=\alpha_{h^{-1}}(a),\lambda_{g}(\xi)(h)=\xi(g^{-1}h).
\]
Consider the regular representation, $\widetilde{id}\times\lambda,$ of
$A\times_{\alpha}G$ induced by ($\widetilde{id},\lambda,l^{2}(G,H_{u})):$%
\[
(\widetilde{id}\times\lambda)(f)(p)=%
{\displaystyle\sum\limits_{g\in G}}
\alpha_{p^{-1}}(f(g))\xi(g^{-1}p).
\]
Then, $\widetilde{id}\times\lambda$\ is a faithful non degenerate
representation of $A\times_{\alpha}G$\ on $l^{2}(G,H_{u}$) [7, Section 7.7.].

\begin{remark}
\ $A\times_{\alpha}G\subset M(A)\times_{\alpha}G\subset A^{^{\prime\prime}%
}\times_{\alpha}G$ where $A^{^{\prime\prime}}\times_{\alpha}G$ is the W*-cross
product of $A^{^{\prime\prime}}$\ by the double dual action $\alpha
^{^{\prime\prime}}$ of $\alpha.$
\end{remark}

\bigskip I will also use the following

\begin{lemma}
Suppose $G\ $is finite. Then, $M_{loc}(A)\subset M_{loc}(A\times_{\alpha}G).$
\end{lemma}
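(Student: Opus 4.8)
The plan is to realize $M_{loc}(A)$ inside $M_{loc}(A\times_{\alpha}G)$ by transporting essential ideals of $A$ to essential ideals of the crossed product. As in the proof of Lemma 3 a), since $G$ is finite the $\alpha$-invariant essential ideals are cofinal among all essential ideals of $A$: any essential ideal $J$ contains the invariant essential ideal $\cap_{g\in G}\alpha_{g}(J)$. Hence $M_{loc}(A)=\varinjlim_{I}M(I)$ with $I$ ranging over $\alpha$-invariant essential ideals, and it suffices to embed each such $M(I)$ compatibly into $M_{loc}(A\times_{\alpha}G)$.

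Fix an $\alpha$-invariant essential ideal $I\subset A$. First I would check that $I\times_{\alpha}G$ is an essential ideal of $A\times_{\alpha}G$. That it is a two-sided ideal is the routine computation using $\delta_{g}a=\alpha_{g}(a)\delta_{g}$ together with $\alpha_{g}(I)=I$. For essentiality, recall (as noted in the excerpt) that for finite $G$ every element of $A\times_{\alpha}G$ is a finite sum $x=\sum_{h}c_{h}\delta_{h}$ with $c_{h}\in A$. If $x(I\times_{\alpha}G)=(0)$, then in particular $x(a\delta_{e})=\sum_{h}c_{h}\alpha_{h}(a)\delta_{h}=0$ for every $a\in I$, so $c_{h}\alpha_{h}(a)=0$ for all $h$ and all $a\in I$; since $\alpha_{h}(I)=I$ this gives $c_{h}I=(0)$, and essentiality of $I$ in $A$ forces $c_{h}=0$. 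Thus $x=0$, so $I\times_{\alpha}G$ is essential and $M(I\times_{\alpha}G)\subset M_{loc}(A\times_{\alpha}G)$.

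Next I would embed $M(I)$ into $M(I\times_{\alpha}G)$ by $m\mapsto m\delta_{e}$. A direct convolution computation gives $(m\delta_{e})(a\delta_{g})=(ma)\delta_{g}$ and $(a\delta_{g})(m\delta_{e})=(a\,\alpha_{g}(m))\delta_{g}$; here $ma\in I$ because $m\in M(I)$, while $a\,\alpha_{g}(m)\in I$ because $\alpha_{g}(m)\in M(\alpha_{g}(I))=M(I)$ — this is precisely where invariance of $I$ enters. Hence left and right multiplication by $m\delta_{e}$ preserve $I\times_{\alpha}G$, and associativity shows $m\delta_{e}$ is a double centralizer, i.e. a multiplier of $I\times_{\alpha}G$; injectivity of $m\mapsto m\delta_{e}$ follows from $mI=(0)\Rightarrow m=0$. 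Finally, for invariant essential ideals $I\subset J$ the restriction maps $M(J)\to M(I)$ and $M(J\times_{\alpha}G)\to M(I\times_{\alpha}G)$ intertwine these embeddings, since $m\delta_{e}$ restricts to $m\delta_{e}$; therefore the maps $M(I)\hookrightarrow M(I\times_{\alpha}G)\subset M_{loc}(A\times_{\alpha}G)$ assemble into an injective $*$-homomorphism of inductive limits, yielding $M_{loc}(A)\subset M_{loc}(A\times_{\alpha}G)$.

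I expect the essentiality of $I\times_{\alpha}G$ to be the main point to secure carefully; the rest is formal. The argument is clean exactly because $G$ is finite, so that elements of the crossed product are genuine finite sums and each $\alpha_{g}$ is defined on $M(I)$, letting the right-multiplication computation close up against the annihilator condition.
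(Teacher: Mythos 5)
Your proof is correct and follows the same route as the paper's: reduce to the cofinal family of $\alpha$-invariant essential ideals $I$ (as in Lemma 3 a)), show $I\times_{\alpha}G$ is essential in $A\times_{\alpha}G$, and pass through $M(I)\subset M(I\times_{\alpha}G)\subset M_{loc}(A\times_{\alpha}G)$ to the inductive limits. The paper compresses the essentiality of $I\times_{\alpha}G$ and the embedding $M(I)\subset M(I\times_{\alpha}G)$ into a "standard argument"; your annihilator computation and the explicit double-centralizer map $m\mapsto m\delta_{e}$ are exactly the details being elided.
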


\begin{proof}
According to the proof of Lemma 3. a), $M_{loc}(A)$\ is the inductive limit of
$\left\{  M(I):I\subset A\text{ }\alpha\text{-invariant essential
ideal}\right\}  $. A standard argument, using that $G$\ is finite, shows that
for every $\alpha$-invariant essential ideal $I\subset A,\ I\times_{\alpha}%
G$\ is an essential ideal of $A\times_{\alpha}G.$ Since $M(I)\subset
M(I\times_{\alpha}G)\subset M_{loc}(A\times_{\alpha}G)$, it follows that
$M_{loc}(A)\subset M_{loc}(A\times_{\alpha}G).$
\end{proof}

\bigskip In the following definitions we assume that the group $G$\ is finite.

\begin{definition}
a) If $(A,G,\alpha)$\ is a C*-dynamical system with $A$\ prime, the action
$\alpha$\ is called properly outer if no $\alpha_{g},g\neq e$\ is implemented
by a unitary element in $M_{loc}(A).$\newline b) The action is called strictly
outer if $A^{\prime}\cap M_{loc}(A\times_{\alpha}G)=%
\mathbb{C}
I,$\ where $A^{\prime}$\ is the commutant of $A.$
\end{definition}

\bigskip

In [2, Proposition 3.2.] the authors prove that two other statements are
equivalent with the above Definition 6. part a) for separable C*-algebras. For
not necessarily separable C*-algebras, I use the statement given in the above
Definition 6. a). Actually, as proven in [2], the other two statements imply
the Definition 6. a) for not necessarily separable C*-algebras.

\bigskip

\begin{center}
Now, let $(A,G,\alpha)$\ be a C*-dynamical system with $G$\ \ compact abelian.
Denote by $\widehat{G}$ the dual of $G.$ If $\gamma\in\widehat{G}$\ let
$A_{\gamma}=\left\{  a\in A:\alpha_{g}(a)=\left\langle g,\gamma\right\rangle
a,g\in G\right\}  $ be the corresponding spectral subspace of $A.$ Denote by
$sp(\alpha)$\ the Arveson spectrum of $\alpha$\ and by $\Gamma(\alpha)$\ the
Connes spectrum of $\alpha$:
\[
sp(\alpha)=\left\{  \gamma\in\widehat{G}:A_{\gamma}\neq\left\{  0\right\}
\right\}  .
\]
and, if $\mathcal{H}$\ is the set of all non zero $\alpha$-invariant
hereditary subalgebras of $A$
\[
\Gamma(\alpha)=\cap\left\{  sp(\alpha|_{B}:B\in\mathcal{H}\right\}  .
\]
I will list some standard facts about the Arveson spectrum of actions of
compact abelian groups (in particular finite abelian groups) that will be used
in Section 3.2.:
\end{center}

1) The linear span of $\cup\left\{  A_{\gamma}:\gamma\in Sp(\alpha)\right\}
$\ is dense in $A.$ Therefore, if $\left\langle t,\gamma\right\rangle =1$ for
every $\gamma\in Sp(\alpha),$ then $\alpha_{t}$\ is the identity automorphism.

2) If $\gamma_{1},\gamma_{2}\in Sp(\alpha),$ then $A_{\gamma_{1}}A_{\gamma
_{2}}\subset A_{\gamma_{1}+\gamma_{2}}.$ Here, $A_{\gamma_{1}}A_{\gamma_{2}}$
denotes the linear span of $\left\{  ab:a\in A_{\gamma_{1}},b\in A_{\gamma
_{2}}\right\}  .$

3) If $\gamma\in Sp(\alpha),$ $A_{\gamma}A_{\gamma}^{\ast}$ and $A_{\gamma
}^{\ast}A_{\gamma}$\ are ideals of $A^{\alpha}$. Here $A_{\gamma}A_{\gamma
}^{\ast}$ denotes the linear span of $\left\{  ab^{\ast}:a,b\in A_{\gamma
}\right\}  .$

4) If $A^{\alpha}$\ is prime, $Sp(\alpha)$\ is a subgroup of $\widehat{G}%
.$This follows from 2) and 3).

5) If $A^{\alpha}$ is prime and the action $\alpha$\ is faithful (that is
$\alpha_{t}\neq id$\ for $t\neq0$), then $Sp(\alpha)=\widehat{G}$. This
follows from 4) and 1).

The following result was proven by Olesen for simple C*-algebras, $A$ and
actions for which $sp(\alpha)/\Gamma(\alpha)$\ is compact in $\widehat{G}%
/\Gamma(\alpha)$ in [4, Theorem 4.2.] (see also [7, Theorem 8.9.7.]). If
($A,G,\alpha)$ is a C*-dynamical system, $A$\ is called $G$-prime if every non
zero $\alpha$-invariant ideal of $A$ is an essential ideal.

\bigskip

\begin{proposition}
Let ($A,G,\alpha)$ be a C*-dynamical system with $A$ $G$-prime and
$G$\ compact abelian such that $\widehat{G}/\Gamma(\alpha)$\ is finite (in
particular $G$\ finite group). If \ $t_{0}\in G$\ is such that $\left\langle
t_{0},\gamma\right\rangle =1$\ \ for every $\gamma\in\Gamma(\alpha)$\ then
there exists an $\alpha$-invariant (essential) ideal $J\subset A$\ and a
unitary operator $u\in M(J)^{\alpha}$\ such that $\alpha_{t_{0}}=Adu.$
\end{proposition}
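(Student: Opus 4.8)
The plan is to reduce the statement to a finite‑group problem and then build the implementing unitary directly from the spectral subspaces, adapting Olesen's argument for simple $A$ in [4, Theorem 4.2] to the $G$‑prime setting. First I would set $H=\Gamma(\alpha)^{\perp}=\{t\in G:\langle t,\gamma\rangle=1\text{ for all }\gamma\in\Gamma(\alpha)\}$. Since $\Gamma(\alpha)$ is a closed subgroup and $\widehat{G}/\Gamma(\alpha)$ is finite, Pontryagin duality gives $\widehat{H}\cong\widehat{G}/\Gamma(\alpha)$, so $H$ is a finite subgroup of $G$, and the hypothesis on $t_{0}$ says precisely that $t_{0}\in H$. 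The character $\gamma\mapsto\langle t_{0},\gamma\rangle$ on $Sp(\alpha)$ is then trivial on $\Gamma(\alpha)$, hence factors through the finite quotient and takes only finitely many (root of unity) values. Since $\alpha_{t_{0}}(a)=\langle t_{0},\gamma\rangle a$ for $a\in A_{\gamma}$, the automorphism $\alpha_{t_{0}}$ preserves every spectral subspace, is the identity on the $\Gamma(\alpha)$‑graded part (in particular on $A^{\alpha}$), and is constant on cosets of $\Gamma(\alpha)$.

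The target is an $\alpha$‑invariant unitary $u$ in $M(J)$, for some $\alpha$‑invariant essential ideal $J$, with $uau^{\ast}=\langle t_{0},\gamma\rangle a$ for $a\in A_{\gamma}$; equivalently $u$ lies in the degree‑zero part of the grading, commutes with $A^{\alpha}$, and scales each $A_{\gamma}$ by the scalar $\langle t_{0},\gamma\rangle$. The role played by simplicity in Olesen's proof is here played by $G$‑primeness: using facts 2) and 3) together with $G$‑primeness, I would show that the relevant spectral subspaces $A_{\gamma}$ are non‑zero and that the ideals $A_{\gamma}A_{\gamma}^{\ast}$ and $A_{\gamma}^{\ast}A_{\gamma}$ they generate in $A^{\alpha}$ are essential. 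This ``fullness'' of the spectral subspaces is exactly what is needed to produce normalizing partial isometries.

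With fullness in hand I would construct $u$ as a finite assembly over a set of coset representatives of the finite group $\widehat{G}/\Gamma(\alpha)$. For each representative $\gamma$ one extracts from $A_{\gamma}$, via polar decomposition of elements whose source and range projections form approximate units of the essential ideals $A_{\gamma}^{\ast}A_{\gamma}$ and $A_{\gamma}A_{\gamma}^{\ast}$, a normalizing element of degree $\gamma$ in the multiplier algebra; combining these across the finitely many cosets yields a single degree‑zero unitary $u$ that scales $A_{\gamma}$ by $\langle t_{0},\gamma\rangle$, i.e. $\mathrm{Ad}\,u=\alpha_{t_{0}}$. Taking $J$ to be the (necessarily $\alpha$‑invariant, and by $G$‑primeness essential) ideal on which all these data are supported places $u$ in $M(J)$, and symmetrizing the construction over $G$---or invoking Lemma 2 b) applied to $J$---puts $u$ in $M(J)^{\alpha}=M(J^{\alpha})$.

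The main obstacle is the passage from partial isometries living in individual spectral subspaces to one globally defined, genuinely unitary, $\alpha$‑invariant $u$: the partial isometries for different cosets must be matched compatibly (a cocycle‑type consistency condition) and must multiply up to a full unitary rather than a mere partial isometry in $M(J)$. This is precisely where the two hypotheses are indispensable: finiteness of $\widehat{G}/\Gamma(\alpha)$ turns the construction into a finite combination with no convergence issues, and $G$‑primeness guarantees that the ideals generated by the spectral subspaces are essential, so that the source and range projections can be pushed to approximate units and the supporting ideal $J$ is essential.
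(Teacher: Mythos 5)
The decisive gap is that your argument never uses what $\Gamma(\alpha)$ actually \emph{is}: the Connes spectrum enters your outline only through the bookkeeping that $H=\Gamma(\alpha)^{\perp}$ is finite and $t_{0}\in H$. Any correct proof must exploit the defining property of $\Gamma(\alpha)$ as the intersection of Arveson spectra over nonzero invariant hereditary subalgebras, and the paper does so at its very first step: by [7, Proposition 8.8.7] (using $A$ $G$-prime and $\widehat{G}/\Gamma(\alpha)$ finite) there is a nonzero $\alpha$-invariant hereditary C*-subalgebra $C\subset A$ with $sp(\alpha|_{C})=\Gamma(\alpha)$, hence $\alpha_{t_{0}}|_{C}=id$; this is the only point where the hypothesis on $t_{0}$ acquires content. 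If your strategy were sound, it would equally prove the statement with $\Gamma(\alpha)$ replaced by \emph{any} closed subgroup $\Lambda\subset\widehat{G}$ of finite index annihilated by $t_{0}$, since those are the only features of $\Gamma(\alpha)$ you invoke. Taking $\Lambda=\{0\}$ and $G$ finite abelian, that modified statement would say that every $\alpha_{t}$ is implemented by a unitary in some $M(J)\subset M_{loc}(A)$, which is false for any properly outer faithful action of a finite abelian group on a prime C*-algebra (e.g., the product-type outer $\mathbb{Z}_{2}$-action on the CAR algebra, where $M_{loc}(A)=A$). So the approach cannot close without importing the hereditary-subalgebra input.

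Two of your concrete steps also fail as stated. First, the ``fullness'' claim is not available: $G$-primeness of $A$ does not give $A_{\gamma}\neq\{0\}$ for the relevant $\gamma$, nor essentialness of $A_{\gamma}A_{\gamma}^{\ast}$ in $A^{\alpha}$; properties 4) and 5) in the paper's list require $A^{\alpha}$ prime, which is a genuinely stronger hypothesis (compare Theorem 9, where it appears as a separate equivalent condition). A concrete counterexample: $\mathbb{Z}_{4}$ acting on $M_{2}(\mathbb{C})$ by $Ad\,diag(1,i)$ is faithful on a simple (hence $G$-prime) algebra, yet $A_{2}=\{0\}$ and $\Gamma(\alpha)=\{0\}$; the Proposition holds there (with $J=A$ and $u=diag(1,i)$), but your construction has nothing to extract from $A_{2}$. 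Second, the assembly step---matching degree-$\gamma$ partial isometries over coset representatives into a single degree-zero unitary with $Ad\,u=\alpha_{t_{0}}$---is exactly the hard point, and you leave it as an acknowledged obstacle rather than resolve it. The paper avoids it entirely: with $C$ as above and $J=\overline{ACA}$ (essential and invariant by $G$-primeness), it applies the von Neumann algebra result [7, Lemma 8.9.1] to $J^{\prime\prime}$ and the open projection $p$ of $C$ (which has full central support in $J^{\prime\prime}$), obtaining a \emph{unique} unitary $u\in J^{\prime\prime}$ with $up=p$ and $Ad\,u=\alpha_{t_{0}}^{\prime\prime}$ on $J^{\prime\prime}$; uniqueness together with commutativity of $G$ yields $\alpha_{t}(u)=u$ for all $t$, and membership $u\in M(J)$ follows from the ideal argument that $J_{0}=\left\{x\in J:ux\in J\right\}$ is a closed two-sided ideal containing $C$, hence equal to $J$. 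Without a substitute for these two ingredients---the hereditary subalgebra from [7, Proposition 8.8.7] and the von Neumann algebra implementation lemma---your outline cannot be completed.
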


\begin{proof}
\bigskip Applying [7. Proposition 8.8.7.] for $A$ $G$-prime and $G$\ compact
with $\widehat{G}/\Gamma(\alpha)$\ finite, it follows that there exists a non
zero $\alpha-$invariant hereditary C*-subalgebra, $C\subset A$\ such that
$sp(\alpha|_{C})=\Gamma(\alpha).$Therefore, $\alpha_{t_{0}}|_{C}=id.$ Let
$J=\overline{ACA}$\ be the closed two sided ideal of $A$\ generated by $C.$
Since $C$\ is $\alpha$-invariant and $A$\ is $G$-prime, it follows that
$J$\ is an esseential $\alpha$-invariant ideal of $A.$ Using an approximate
identity of $A$\ it follows that $C\subset J\subset J^{\prime\prime}$.

Let $p$\ be the open projection\ corresponding to $C$\ in $A^{^{\prime\prime}%
}$. Then $p\in J^{\prime\prime}$. \ Denote by $1_{J}$ the unit of
$J^{^{\prime\prime}}$ ($1_{J}$ is the open projection in $A^{^{\prime\prime}%
}\ $corresponding.to $J).$ Notice that the central support $c(p)$\ of $p$\ in
$J^{^{\prime\prime}}$ equals $1_{J}.$\ Indeed if $q\geqslant p$\ is a central
projection in $J^{^{\prime\prime}},$ then $qJ^{^{\prime\prime}}$ is an ideal
of $J"$\ that contains $C,$\ so $q=1_{J}$.

Applying [7, Lemma 8.9.1.] to $M=J^{^{\prime\prime}}$ and $w=p,$ it follows
that there exists a unique unitary $u\in J^{^{\prime\prime}}\ $such that
$up=p$\ and $\alpha_{t_{0}}^{\prime\prime}|_{J^{^{\prime\prime}}%
}=Adu|_{J^{\prime\prime}}.$ Since $\alpha_{t_{0}}^{^{\prime\prime}%
}(p)=p=upu^{\ast}$\ \ we have $up=pu=p.$ Since $\alpha_{t_{0}}^{^{\prime
\prime}}|_{J^{^{^{\prime\prime}}\prime}}=Adu|_{J^{^{\prime\prime}}}$ and
$M(J)\subset J^{^{\prime\prime}},$\ it follows that $\alpha_{t_{0}}%
|_{M(J)}=Adu|_{M(J)}.$ Also, since $G$ is abelian, $\alpha_{t}(p)=p,$ for
every $t\in G,$ and $u$\ is unique with the listed properties, it follows
that, $\alpha_{t}(u)=u$ for every $t\in G.$

We will prove next that $u\in M(J).$\ Let $J_{0}=\left\{  x\in J:ux\in
J\right\}  .$\ Clearly $J_{0}$\ is a closed right ideal of $J.$\ Now, if $y\in
J,$\ and $x\in J_{0}$\ we have $u%
\operatorname{y}%
x=u%
\operatorname{y}%
u^{\ast}ux$\ =$\alpha_{t_{0}}(y)ux\in J,$ so $J_{0}$\ \ is also a left ideal.
Notice that $C\subset J_{0}.$ Indeed, if $x\in C,$ we have $ux=upx=px=x\in
C\subset J,$ so $x\in J_{0}.$ Therefore, $C\subset J_{0}$\ $\subset J$\ .
Since $J$ is the (two sided) ideal generated by $C$\ it follows that
$J_{0}=J,$ so $u$\ is a left multiplier of $J.$\ To prove that $u$\ is also a
right multiplier of $J$, notice that if $x\in J$\ we have $xu=uu^{\ast
}xu=u\alpha_{t_{0}}^{-1}(x)\in J.$ Since $M(A)\subset M(J)$ (in $M_{loc}(A)),$
it follows that $\alpha_{t_{0}}=Adu.$
\end{proof}

\begin{corollary}
Let ($A,G,\alpha)$ be a C*-dynamical system with $A$ $G$-prime and $G$\ finite
abelian. If \ $t_{0}\in G$\ is such that $\left\langle t_{0},\gamma
\right\rangle =1$\ \ for every $\gamma\in\Gamma(\alpha)$\ then $\alpha_{t_{0}%
}$\ is inner in $M_{loc}(A)^{\alpha}.$
\end{corollary}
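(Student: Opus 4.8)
The plan is to deduce the Corollary directly from Proposition 7, which has already done essentially all of the analytic work. Under the hypotheses of the Corollary, $G$ is finite abelian, so in particular $G$ is compact abelian and $\widehat{G}/\Gamma(\alpha)$ is finite; thus Proposition 7 applies verbatim to the element $t_0\in G$ satisfying $\langle t_0,\gamma\rangle=1$ for every $\gamma\in\Gamma(\alpha)$. Proposition 7 then hands me an $\alpha$-invariant essential ideal $J\subset A$ together with a unitary $u\in M(J)^{\alpha}$ such that $\alpha_{t_0}=\mathrm{Ad}\,u$.

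The remaining task is to relocate this unitary from $M(J)^{\alpha}$ into $M_{loc}(A)^{\alpha}$ and to interpret the relation $\alpha_{t_0}=\mathrm{Ad}\,u$ as innerness at the level of $M_{loc}(A)^{\alpha}$. First I would observe that since $J$ is an $\alpha$-invariant essential ideal, it is in particular an essential ideal, so $M(J)$ is one of the multiplier algebras in the inductive system defining $M_{loc}(A)$; hence the canonical embedding $M(J)\hookrightarrow M_{loc}(A)$ carries the fixed-point algebra $M(J)^{\alpha}$ into $M_{loc}(A)^{\alpha}$. Consequently $u$, being fixed by $\alpha$ and unitary in $M(J)$, becomes a unitary element of $M_{loc}(A)^{\alpha}$.

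Next I would upgrade the relation $\alpha_{t_0}=\mathrm{Ad}\,u$ to the full algebra $M_{loc}(A)$. The point is that $u$ implements $\alpha_{t_0}$ on $A$ (indeed on $M(A)$, as established at the end of the proof of Proposition 7), and since $\alpha_{t_0}$ is the canonical extension of the action to $M_{loc}(A)$ while $u\in M_{loc}(A)$, the equality $\alpha_{t_0}(x)=uxu^{*}$ holds on a dense subalgebra and therefore on all of $M_{loc}(A)$ by continuity. In particular $\alpha_{t_0}$ restricted to $M_{loc}(A)^{\alpha}$ is $\mathrm{Ad}\,u$ with $u\in M_{loc}(A)^{\alpha}$, which is precisely the assertion that $\alpha_{t_0}$ is inner in $M_{loc}(A)^{\alpha}$.

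The one point requiring a little care—and the place I would expect the only real friction—is making sure the phrase ``inner in $M_{loc}(A)^{\alpha}$'' is justified: one must check both that the implementing unitary genuinely lies in the fixed-point algebra $M_{loc}(A)^{\alpha}$ (this is the content of $u\in M(J)^{\alpha}$ combined with the functoriality of fixed points under the inductive limit) and that conjugation by $u$ preserves $M_{loc}(A)^{\alpha}$ so that $\mathrm{Ad}\,u$ is a genuine automorphism of that subalgebra. The latter is automatic because $u$ is $\alpha$-invariant, so $\mathrm{Ad}\,u$ commutes with the action and hence maps $\alpha$-fixed elements to $\alpha$-fixed elements. Everything else is a direct transcription of Proposition 7, so the proof should be short.
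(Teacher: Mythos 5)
Your proof is correct in its core and takes essentially the same route as the paper: apply Proposition 7 to get $J$ and a unitary $u\in M(J)^{\alpha}$ with $\alpha_{t_{0}}=\mathrm{Ad}\,u$, then observe that $u$ lives in $M_{loc}(A)^{\alpha}$. The only (cosmetic) difference in this step is the justification: the paper cites [6, page 303, statement 3)], i.e.\ $M_{loc}(J)=M_{loc}(A)$ for an essential ideal $J$, whereas you use the equally valid and arguably more direct fact that $M(J)$ is already one of the terms of the inductive system $\left\{ M(I):I\in\mathcal{I}\right\}$ defining $M_{loc}(A)$, and that this embedding respects the $\alpha$-fixed points because $J$ is $\alpha$-invariant.

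One concrete warning about your middle paragraph: the claim that $\alpha_{t_{0}}(x)=uxu^{\ast}$ extends from $M(A)$ to all of $M_{loc}(A)$ ``by continuity, since it holds on a dense subalgebra'' is false as stated. Neither $A$, nor $M(A)$, nor the single algebra $M(J)$ is norm-dense in $M_{loc}(A)$; what is dense is the increasing union $\cup_{I}M(I)$ over all ($\alpha$-invariant) essential ideals $I$, and the identity $\alpha_{t_{0}}=\mathrm{Ad}\,u$ has not been established on that union. Fortunately this step is not needed: in this paper ``inner in $M_{loc}(A)^{\alpha}$'' means (as its use in the proof of Theorem 9, d)$\Rightarrow$a), confirms) that the automorphism $\alpha_{t_{0}}$ of $A$ is implemented by a unitary lying in $M_{loc}(A)^{\alpha}$, which you already have at that point. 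If you do want the identity on all of $M_{loc}(A)$, argue instead as follows: for $x\in M(I)$ with $I$ an $\alpha$-invariant essential ideal and $a\in I\cap J$ (an essential ideal with $M(I)\subset M(I\cap J)$, so $xa\in I\cap J\subset J$), compute $\alpha_{t_{0}}(x)uau^{\ast}=\alpha_{t_{0}}(xa)=uxau^{\ast}$, hence $\left( \alpha_{t_{0}}(x)u-ux\right) a=0$ for all $a\in I\cap J$; by [1, Lemma 2.3.3.ii)] this gives $\alpha_{t_{0}}(x)=uxu^{\ast}$, and only then pass to the closure of $\cup_{I}M(I)$ by norm continuity.
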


\begin{proof}
\bigskip Proposition 7 shows that $\alpha_{t_{0}}=Adu,u\in M(J)^{\alpha
}\subset M_{loc}(J)$. By [6, page 303, statement 3)], since $J$\ is an
essential ideal of $A,$\ \ $M_{loc}(J)=M_{loc}(A),$ so $\alpha_{t_{0}}$\ is
inner in $M_{loc}(A)^{\alpha}.$
\end{proof}

\bigskip I will close this section with the following

\begin{theorem}
Let $(A,G,\alpha)$ be a C*-dynamical system with $G$\ finite abelian, $\alpha
$\ faithful and $A$ $G$-prime. Then the following statements are
equivalent:\newline a) $Sp$($\alpha)=\Gamma(\alpha).$\newline b) $A^{\alpha}%
$\ is prime.\newline c) The center of $M_{loc}(A)^{\alpha}$ consists of
scalars.\newline d) No $\alpha_{t}\neq id$\ is implemented by a unitary in
$M_{loc}(A)^{\alpha}.$
\end{theorem}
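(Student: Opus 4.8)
The plan is to prove the four equivalences via a cycle, establishing each implication using the spectral-subspace facts 1)--5) listed above together with the structural Lemmas 1 and 3 and Corollary 8. The natural ordering is (a)$\Rightarrow$(b)$\Rightarrow$(c)$\Rightarrow$(d)$\Rightarrow$(a), since each arrow seems to have a clean argument from the tools at hand. Throughout I would exploit that $G$ is finite abelian, so $\widehat{G}$ is a finite abelian group, $\Gamma(\alpha)$ is a subgroup of $\widehat{G}$, and the quotient $\widehat{G}/\Gamma(\alpha)$ is automatically finite, which makes Proposition 7 and Corollary 8 directly applicable.

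For (a)$\Rightarrow$(b), I would argue contrapositively, or directly: assume $Sp(\alpha)=\Gamma(\alpha)$ and show $A^{\alpha}$ is prime. The idea is that when the Arveson and Connes spectra coincide, the action is ``spectrally saturated'' in a way that forces the fixed-point algebra to inherit primeness from the $G$-primeness of $A$. Concretely, I expect to use fact 3)---that the products $A_{\gamma}A_{\gamma}^{*}$ are ideals of $A^{\alpha}$---to relate nonzero ideals of $A^{\alpha}$ back to $\alpha$-invariant ideals of $A$, and then invoke $G$-primeness. For (b)$\Rightarrow$(c), the implication is essentially Lemma 3 c): if $A^{\alpha}$ is prime, then the center of $M_{loc}(A)^{\alpha}$ consists of scalars. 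This step is therefore immediate from a result already proved.

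For (c)$\Rightarrow$(d), I would argue by contraposition: suppose some $\alpha_t \neq id$ is implemented by a unitary $u \in M_{loc}(A)^{\alpha}$. Since $u$ is $\alpha$-invariant and implements $\alpha_t$, I would examine whether $u$ (or a related element built from the finite group structure) lies in the center of $M_{loc}(A)^{\alpha}$ and is non-scalar. The key point is that $u$ commutes with $A^{\alpha}$ (because $\alpha_t$ fixes $A^{\alpha}$ pointwise and $u$ implements $\alpha_t$), and by Lemma 3 b) an element of $M_{loc}(A)$ commuting with $A^{\alpha}$ commutes with all of $M_{loc}(A)^{\alpha}$; hence $u$ is central in $M_{loc}(A)^{\alpha}$. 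Since $\alpha_t \neq id$, the unitary $u$ cannot be a scalar, contradicting (c).

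For the closing arrow (d)$\Rightarrow$(a), I would again use contraposition together with Corollary 8. Suppose $Sp(\alpha) \neq \Gamma(\alpha)$; since $G$ is finite abelian and $\alpha$ is faithful with $A$ being $G$-prime, fact 5) is not quite available (that needs $A^{\alpha}$ prime), so instead I would use that $\Gamma(\alpha)$ is a proper subgroup and pick an element $t_0 \in G$ with $\langle t_0,\gamma\rangle = 1$ for all $\gamma \in \Gamma(\alpha)$ but $\alpha_{t_0} \neq id$; such a $t_0$ exists precisely because $\Gamma(\alpha)$ being a proper subgroup of $\widehat G$ leaves a nontrivial annihilator in $G$, and faithfulness combined with fact 1) ensures we can choose this $t_0$ to act nontrivially. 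Then Corollary 8 yields that $\alpha_{t_0}$ is inner in $M_{loc}(A)^{\alpha}$, contradicting (d). The main obstacle I anticipate is (a)$\Rightarrow$(b): pinning down exactly how the coincidence of spectra forces primeness of $A^{\alpha}$ requires care with the ideal structure from facts 2)--4), and ensuring the $G$-prime hypothesis transfers correctly to the fixed-point algebra rather than merely to invariant ideals of $A$.
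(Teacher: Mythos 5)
Your arrows (b)$\Rightarrow$(c), (c)$\Rightarrow$(d) and (d)$\Rightarrow$(a) are correct and essentially identical to the paper's own proof: (b)$\Rightarrow$(c) is the argument of Lemma 3 c) (the paper reruns it through Lemma 1 applied to $A^{\alpha}$ together with the inclusion $M_{loc}(A)^{\alpha}\subset M_{loc}(A^{\alpha})$); (c)$\Rightarrow$(d) is exactly the paper's argument via Lemma 3 b); and (d)$\Rightarrow$(a) is the paper's use of Corollary 8, with the minor and legitimate variant that you obtain $\alpha_{t_{0}}\neq id$ from faithfulness of $\alpha$ applied to a nonzero $t_{0}$ in the annihilator of $\Gamma(\alpha)$, whereas the paper chooses $t_{0}$ so that $\left\langle t_{0},\gamma\right\rangle \neq 1$ for some $\gamma\in Sp(\alpha)$. (Both you and the paper invoke Lemma 3 b) in a setting where $A$ is only $G$-prime although that lemma is stated for $A$ prime; this is harmless, since the proofs of Lemma 3 a), b) never use primeness, but it deserves a remark.)

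The genuine gap is (a)$\Rightarrow$(b), which you yourself flag as the main obstacle and for which you offer only a heuristic. The paper does not prove this implication from scratch either: it disposes of the whole equivalence a)$\Leftrightarrow$b) by citing [7, Theorem 8.10.4], the Olesen--Pedersen theorem that, for $G$ compact abelian, $A^{\alpha}$ is prime if and only if $A$ is $G$-prime and $Sp(\alpha)=\Gamma(\alpha)$. That is a substantive theorem whose proof runs through the preparatory results 8.10.1--8.10.3 of Pedersen's book, and it cannot be recovered from facts 2)--4) in the way you suggest: those facts concern only the Arveson spectrum $Sp(\alpha)$, while your sketch never engages the Connes spectrum $\Gamma(\alpha)$ at all --- in particular it never uses $\alpha$-invariant hereditary subalgebras, through which $\Gamma(\alpha)$ is defined. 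Since $G$-primeness alone does not imply that $A^{\alpha}$ is prime (e.g. an inner $\mathbb{Z}/2$-action on $M_{2}$ has $A$ simple but $A^{\alpha}$ not prime), any correct proof must use the hypothesis $Sp(\alpha)=\Gamma(\alpha)$ in an essential way, so an argument that never touches $\Gamma(\alpha)$ cannot close this arrow. To complete your cycle you must either cite [7, Theorem 8.10.4], as the paper does, or reproduce its proof.
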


\begin{proof}
$a)\Leftrightarrow b)$ This is [7 Theorem 8.10.4.].\newline$b)\Rightarrow c)$
Since $A^{\alpha}$\ is prime, from Lemma 1, a)$\Leftrightarrow$b$),$ with
$A^{\alpha}$\ instead of $A$ it follows that the center of $M_{loc}(A^{\alpha
})$\ is $%
\mathbb{C}
I.$\ Applying Lemma 1, b)$\Leftrightarrow$c) to $A^{\alpha}$\ instead of $A$,
it follows that the center of $M_{loc}(A)^{\alpha}\subset M_{loc}(A^{\alpha})$
is $%
\mathbb{C}
I.$\newline$c)\Rightarrow d)$ If for some $t_{0}\in G,$\ $\alpha_{t_{0}}$\ is
implemented by a unitary $u\in M_{loc}(A)^{\alpha},$ then $u$ commutes with
$A^{\alpha}$. $\ $From Lemma 3 b) it follows that $u\ $commutes with
$M_{loc}(A)^{\alpha},$ so $u$ belongs to the center of $M_{loc}(A)^{\alpha},$
hence $u$ is a scalar and $\alpha_{t_{0}}=id.$\newline$d)\Rightarrow a)$
Suppose $Sp$($\alpha)\neq\Gamma(\alpha).$\ Since $\Gamma(\alpha)$ is a
subgroup of $\widehat{G}$, there exists $t_{0}\in G$\ such that $\left\langle
t_{0},\gamma\right\rangle =1$\ \ for every $\gamma\in\Gamma(\alpha)$ and
$\left\langle t_{0},\gamma\right\rangle \neq1$ for some $\gamma\in
Sp(\alpha).$ From Corollary 8 it follows that $\alpha_{t_{0}}\neq id$ and
$\alpha_{t_{0}}$\ is implemented by a unitary in $M_{loc}(A)^{\alpha},$
contradiction$.$
\end{proof}

\bigskip

\section{Outer and strictly outer actions of finite groups}

\subsection{Finite non abelian groups}

\bigskip

\begin{lemma}
Let $(A,G,\alpha)$ be \ a C*-dynamical system with $G$\ finite.\newline a)
Then $M(A\times_{\alpha}G)=M(A)\times_{\alpha}G.$\newline\ b) If A is prime
and $\alpha$ is properly outer, then $M_{loc}(A\times_{\alpha}G)$ is
isomorphic to $M_{loc}(A)\times_{\alpha}G.$ \ 
\end{lemma}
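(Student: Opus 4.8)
The plan is to prove the two parts separately, with part a) being a largely structural computation and part b) being the substantive one that uses proper outerness.

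For part a), I would work with the regular representation $\widetilde{id}\times\lambda$ on $l^2(G,H_u)$ introduced before the lemma, under which $A\times_\alpha G$ sits faithfully inside $M(A)\times_\alpha G \subset A''\times_\alpha G$ (Remark 4). The key point is that since $G$ is finite, multiplier algebras of crossed products behave well: an element $m = \sum_{g} m_g \delta_g$ with $m_g \in M(A)$ should be a two-sided multiplier of $A\times_\alpha G$ exactly when each coefficient $m_g$ multiplies $A$. First I would show the inclusion $M(A)\times_\alpha G \subset M(A\times_\alpha G)$ by a direct check that left and right multiplication by $\sum m_g\delta_g$ maps $A\times_\alpha G$ into itself, using finiteness of $G$ to avoid convergence issues. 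For the reverse inclusion I would take an arbitrary multiplier $m$ of $A\times_\alpha G$ and expand it against the $\delta_g$'s: because $G$ is finite, the coefficients $a\mapsto m\cdot(a\delta_e)$ determine elements of $M(A)$, and reassembling gives $m \in M(A)\times_\alpha G$. This is routine bookkeeping with the crossed-product multiplication formula already displayed in the excerpt.

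For part b), the strategy is to exhibit $M_{loc}(A)\times_\alpha G$ as the inductive limit realizing $M_{loc}(A\times_\alpha G)$. By the proof of Lemma 3 a), $M_{loc}(A)$ is the inductive limit of $\{M(I): I\subset A \text{ an }\alpha\text{-invariant essential ideal}\}$, and by Lemma 5, for each such $I$ the crossed product $I\times_\alpha G$ is an essential ideal of $A\times_\alpha G$ with $M(I)\times_\alpha G = M(I\times_\alpha G)$ by part a) applied to the system $(I,G,\alpha)$. So $M_{loc}(A)\times_\alpha G = \varinjlim_I M(I\times_\alpha G)$ embeds in $M_{loc}(A\times_\alpha G)$. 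The real content is the surjectivity — that this inductive limit exhausts $M_{loc}(A\times_\alpha G)$, i.e. that every essential ideal of $A\times_\alpha G$ contains one of the form $I\times_\alpha G$ up to passing to local multipliers. This is where primeness of $A$ and proper outerness of $\alpha$ must enter, because without outerness the crossed product can fail to be prime and can acquire extra local multipliers (central elements implementing the $\alpha_g$) that do not come from $M_{loc}(A)\times_\alpha G$.

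The hard part will be precisely this surjectivity/exhaustion step. I expect the argument to run as follows: proper outerness (Definition 6 a)) says no $\alpha_g$, $g\neq e$, is implemented by a unitary in $M_{loc}(A)$, and combined with $A$ prime this should force $A\times_\alpha G$ to be prime, so that every nonzero ideal of the crossed product is essential. The delicate point is to identify essential ideals of $A\times_\alpha G$ with (local multipliers of) crossed products of $\alpha$-invariant essential ideals of $A$; I would argue that an essential ideal $K$ of $A\times_\alpha G$ has the form controlled by its intersection $K\cap A$, which is an $\alpha$-invariant ideal of the prime algebra $A$ hence essential, and then use proper outerness to rule out the pathological extra ideals, concluding $K \supset (K\cap A)\times_\alpha G$ at the level of local multipliers. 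Assembling these identifications and passing to the inductive limit then yields the isomorphism $M_{loc}(A\times_\alpha G)\cong M_{loc}(A)\times_\alpha G$. The inclusion $M_{loc}(A)\subset M_{loc}(A\times_\alpha G)$ from Lemma 5 guarantees at least that the map is well defined and injective, so the whole difficulty is concentrated in showing it is onto, and that is exactly where the outerness hypothesis is indispensable.
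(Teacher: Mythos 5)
Your part a) is essentially the paper's argument and is fine: represent everything inside the W*-crossed product $A''\times_{\alpha''}G$ via $\widetilde{id}\times\lambda$, expand a multiplier as $\sum_{g}b_{g}\delta_{g}$, and use the convolution formulas $(b\cdot\widetilde{a})_{g}=b_{g}\alpha_{g}(a)$ and $(\widetilde{a}\cdot b)_{g}=ab_{g}$ to force each coefficient $b_{g}$ into $M(A)$. In part b) your architecture also matches the paper's: the easy inclusion $M_{loc}(A)\times_{\alpha}G\subset M_{loc}(A\times_{\alpha}G)$ comes from Lemma 5 plus part a) applied to each $\alpha$-invariant essential ideal $I$, and the hard inclusion reduces to showing that every essential ideal $J$ of $A\times_{\alpha}G$ contains $I\times_{\alpha}G$ for some $\alpha$-invariant essential ideal $I\subset A$, whence $M(J)\subset M(I)\times_{\alpha}G\subset M_{loc}(A)\times_{\alpha}G$. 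But at exactly the crucial point you have no argument: everything hinges on proving $J\cap A\neq\{0\}$. (Once that is known the rest is routine: $J\cap A$ is $\alpha$-invariant --- conjugation by the multipliers $\delta_{g}$ preserves $J$ and restricts to $\alpha_{g}$ on $A$, or one passes to $\cap_{g}\alpha_{g}(J\cap A)$ as the paper does --- it is essential since $A$ is prime, and $(J\cap A)\times_{\alpha}G\subset J$.) You write that proper outerness combined with primeness ``should force'' the conclusion and should ``rule out the pathological extra ideals,'' but that is precisely the statement requiring proof, and it is not a routine consequence of Definition 6 a). The paper closes this gap by citation: proper outerness of $\alpha$ on a prime $A$ makes the action purely outer in Rieffel's sense, and Rieffel's theorem [10, Theorem 1.1] then gives that every nonzero ideal of $A\times_{\alpha}G$ has nonzero intersection with $A$. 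Without that theorem, or an equivalent argument, your proof does not close.

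A further problem is that your proposed intermediate step --- first prove that $A\times_{\alpha}G$ is prime --- would not suffice even if you could prove it. Primeness of $A\times_{\alpha}G$ makes every nonzero ideal $J$ essential, but essentiality of $J$ in $A\times_{\alpha}G$ does not by itself imply $J\cap A\neq\{0\}$: an essential ideal of a C*-algebra can meet a given C*-subalgebra trivially (e.g.\ $\left\{ f\in C([0,1]):f(0)=0\right\}$ is essential in $C([0,1])$ yet meets the scalars trivially). The ideal-intersection property is genuinely stronger than primeness of the crossed product, and in fact both it and primeness of $A\times_{\alpha}G$ flow from the same Rieffel-type result, so the detour through primeness is circular rather than elementary. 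The missing ingredient in your proposal is exactly this one theorem; with it inserted, your outline becomes the paper's proof.
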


\begin{proof}
a) It is obvious that ($\widetilde{id}\times\lambda)(A\times_{\alpha}G)$ is
included in the W*-crossed product ($\widetilde{id}\times\lambda)(A^{"}%
\times_{\alpha^{"}}G).$\ Since $\widetilde{id}\times\lambda$ is a faithful and
non degenerate representation of $A\times_{\alpha}G,$ the multiplier algebra
$M(A\times_{\alpha}G)$ is isometrically isomorphic with the algebra of
multipliers of ($\widetilde{id}\times\lambda)(A\times_{\alpha}G)$ in
(($\widetilde{id}\times\lambda)(A^{"}\times_{\alpha^{"}}G))\subset
B(l^{2}(G,H_{u}))$ [7, Prop. 3.12.3]. Let $b=%
{\displaystyle\sum\limits_{g\in G}}
b_{g}\delta_{g}\in A^{"}\times_{\alpha^{"}}G$\ be a multiplier of
$A\times_{\alpha}G.$ If $a\in A$\ is arbitrary, $\widetilde{a}=a\delta_{e}\in
A\times_{\alpha}G,$ where $e$\ is the neutral elementof $G$ and $\cdot$\ is
the multiplication of $A^{"}\times_{\alpha^{"}}G,$\ we have%
\[
(b\cdot\widetilde{a})_{g}=%
{\displaystyle\sum\limits_{p\in G}}
b_{p}\alpha_{p}(\widetilde{a}_{p^{-1}g})=b_{g}\alpha_{g}(a)\in A.
\]%
\[
(\widetilde{a}\cdot b)_{g}=%
{\displaystyle\sum\limits_{p\in G}}
\widetilde{a}_{p}\alpha_{p}(b_{p^{-1}g})=ab_{g}\in A.
\]
Therefore, $b_{g}\in M(A)$\ for every $g\in G,$\ so $b\in M(A)\times_{\alpha
}G$.

b) Let $I$\ be an $\alpha$-invariant essential ideal of $A.$\ Then, a simple
computation shows that $I\times_{\alpha}G$\ is an essential ideal of
$A\times_{\alpha}G$ [10, Proposition 2.2.]$.$ By a), $M(I\times_{\alpha
}G)=M(I)\times_{\alpha}G.$ If $J$\ is an essential ideal of $A\times_{\alpha
}G$\ then there exists an $\alpha$-invariant essential ideal $I$\ of $A$\ such
that $I\times_{\alpha}G\subset J,$ so $M(J)\subset M(I\times_{\alpha
}G)=M(I)\times_{\alpha}G.$ To prove that such an ideal $I$\ exists, notice
that, since $A$\ is prime and $\alpha$\ properly outer, the action $\alpha
$\ is, in particular, "purely outer" in the sense used by Rieffel in [10].
From [10 Theorem 1.1.] it follows that $I_{0}=J\cap A\neq\left\{  0\right\}
.$\ Since $J\subset A\times_{\alpha}G$\ is essential it follows that $I_{0}%
$\ is essential in $A.$\ Since $G$\ is finite, it follows that $I=\cap_{g\in
G}\alpha_{g}(I_{0})\subset I_{0}$\ is an essential $\alpha$-invariant ideal of
$A$ and $I\times_{\alpha}G\subset J.$\ Therefore, $M(J)\subset M(I\times
_{\alpha}G)$. According to a), $M(I\times_{\alpha}G)=M(I)\times_{\alpha}%
G.$\ Therefore%
\[
M(J)\subset M(I)\times_{\alpha}G\subset M_{loc}(A)\times_{\alpha}G.
\]
Hence%
\[
M_{loc}(A\times_{\alpha}G)\subset M_{loc}(A)\times_{\alpha}G.
\]
On the other hand,\ by Lemma 5, $M_{loc}(A)\subset M_{loc}(A\times_{\alpha
}G),$ so\
\[
M_{loc}(A)\times_{\alpha}G\subset M_{loc}(A\times_{\alpha}G).
\]

\end{proof}

\bigskip

\ \ \ \ \ \ \ \ \ \ \ 

\begin{theorem}
Let $(A,G,\alpha)$ be \ a C*-dynamical system with $G$\ finite, $A$\ prime and
$\alpha$\ faithful. Then $\alpha$\ is properly outer if \ and only if it is
strictly outer.
\end{theorem}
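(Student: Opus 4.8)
The plan is to prove the two implications separately, treating \emph{strictly outer} $\Rightarrow$ \emph{properly outer} by contraposition. The tools I expect to use are: the canonical unitaries $\delta_g\in M(A\times_\alpha G)$ (Lemma 10 a)) which implement the action, so that $\delta_g(a\delta_e)=\alpha_g(a)\delta_g$; the inclusion $M_{loc}(A)\subset M_{loc}(A\times_\alpha G)$ (Lemma 5), which lets $M_{loc}(A)\times_\alpha G$ sit inside $M_{loc}(A\times_\alpha G)$ with its canonical conditional expectation; and the two key facts $A'\cap M_{loc}(A)=\mathbb{C}I$ (Lemma 3 d)) and, when $\alpha$ is properly outer, the identification $M_{loc}(A\times_\alpha G)\cong M_{loc}(A)\times_\alpha G$ (Lemma 10 b)).

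For not properly outer $\Rightarrow$ not strictly outer, suppose some $\alpha_{g_0}$, $g_0\neq e$, equals $\mathrm{Ad}(u)$ for a unitary $u\in M_{loc}(A)$, so $\alpha_{g_0}(a)=uau^*$ for $a\in A$. I would set $v=u^*\delta_{g_0}$, which lies in the copy of $M_{loc}(A)\times_\alpha G$ inside $M_{loc}(A\times_\alpha G)$ by Lemma 5 and Lemma 10 a). Using $\delta_{g_0}(a\delta_e)=\alpha_{g_0}(a)\delta_{g_0}$, a direct computation gives $va=u^*\alpha_{g_0}(a)\delta_{g_0}=(u^*uau^*)\delta_{g_0}=au^*\delta_{g_0}=av$, so $v\in A'$. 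Since $g_0\neq e$, the canonical $M_{loc}(A)$-valued conditional expectation of the crossed product records $v$ with nonzero coefficient $u^*$ in the $\delta_{g_0}$-slot and zero coefficient in the $\delta_e$-slot; hence $v\neq\lambda\delta_e=\lambda I$ for every scalar $\lambda$, so $A'\cap M_{loc}(A\times_\alpha G)\neq\mathbb{C}I$ and $\alpha$ is not strictly outer.

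For the converse, assume $\alpha$ is properly outer and use Lemma 10 b) to identify $M_{loc}(A\times_\alpha G)$ with $M_{loc}(A)\times_\alpha G$, an identification compatible with the embedding of $A$. Take $x\in A'\cap(M_{loc}(A)\times_\alpha G)$ and write the finite sum $x=\sum_{g\in G}x_g\delta_g$ with $x_g\in M_{loc}(A)$. Comparing $\delta_g$-coefficients in $x(a\delta_e)=(a\delta_e)x$ yields, for each $g$, the intertwining relation $x_g\alpha_g(a)=ax_g$ for all $a\in A$. For $g=e$ this says $x_e\in A'\cap M_{loc}(A)=\mathbb{C}I$ by Lemma 3 d). For $g\neq e$, taking adjoints gives $x_g^*b=\alpha_g(b)x_g^*$, and combining the two relations shows that both $x_gx_g^*$ and $x_g^*x_g$ commute with $A$, hence are scalars by Lemma 3 d); the C*-norm identity forces these scalars to be equal. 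If $x_g\neq0$, normalizing produces a unitary $w\in M_{loc}(A)$ with $w\alpha_g(a)=aw$, i.e. $\alpha_g=\mathrm{Ad}(w^*)$, contradicting proper outerness. Thus $x_g=0$ for $g\neq e$ and $x=x_e\delta_e\in\mathbb{C}I$, proving strict outerness.

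I expect the converse to be the main obstacle, specifically the passage from the bare intertwining relation $ax_g=x_g\alpha_g(a)$ to an honest implementing unitary: a priori $x_g$ is only an intertwiner, not invertible, so the argument must manufacture a unitary from it. The leverage comes entirely from Lemma 3 d), which forces $x_gx_g^*$ and $x_g^*x_g$ to be scalars, turning $x_g$ into a scalar multiple of a unitary and enabling the reduction to proper outerness. A secondary point needing care is the coefficientwise comparison in both directions, which relies on the uniqueness of the decomposition $x=\sum_g x_g\delta_g$; this holds because $G$ is finite and the crossed product carries a faithful conditional expectation separating the components.
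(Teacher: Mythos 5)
Your proof is correct and follows essentially the same route as the paper: the forward direction compares $\delta_g$-coefficients of an element of $A'\cap(M_{loc}(A)\times_\alpha G)$ (via Lemma 10 b)), uses Lemma 3 d) to make $x_gx_g^*$ and $x_g^*x_g$ scalars, and normalizes to contradict proper outerness, while the reverse direction pairs the implementing unitary with $\delta_{g_0}$ to produce a non-scalar element of the relative commutant. The only cosmetic differences are that you argue the reverse direction by contraposition with $u^*\delta_{g_0}$ where the paper shows $\delta_{g_0^{-1}}\widehat{v}_0$ is scalar, and you treat an arbitrary element of the relative commutant where the paper restricts to unitaries.
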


\begin{proof}
\ Suppose $\alpha$\ is properly outer and let $v=\sum_{h\in G}v(h)\delta
_{h}\in A^{\prime}\cap M_{loc}(A\times_{\alpha}G)(=M_{loc}(A)\times_{\alpha}G$
by Lemma 10) be a unitary element. Then, for $a\in A$ denote $\widetilde{a}%
=a\delta_{e}\in A\times_{\alpha}G\subset M_{loc}(A)\times_{\alpha}G$
($=M_{loc}(A\times_{\alpha}G)$ by Lemma 10.). We have%

\[
v\widetilde{a}=\widetilde{a}v\text{ \textit{in} }M_{loc}(A)\times_{\alpha
}G\text{ \textit{for all} }a\in A.
\]
Therefore%
\[
\sum_{h}v(h)\alpha_{h}(\widetilde{a}(h^{-1}g))=\sum\widetilde{a}(h)\alpha
_{h}(v(h^{-1}g)),g\in G.
\]
So%
\[
v(g)\alpha_{g}(a)=av(g)\text{ \textit{for all} }g\in G,a\in A.\text{ \ }%
(\ast)
\]
We will prove next that if $v(g)\neq0$ then $v(g)$\ is a scalar multiple of a
unitary element in $M_{loc}(A).$ If we denote\
\[
\widetilde{v}_{g_{0}}=v(g_{0})\delta_{g_{0}}\in M_{loc}(A)\times_{\alpha}G.
\]
then, since\ $v(g)\alpha_{g}(a)=av(g),g\in G$, we have $\widetilde{v}_{g_{0}%
}\widetilde{a}=\widetilde{a}\widetilde{v}_{g_{0}}$ and therefore
$\widetilde{v}_{g_{0}}^{\ast}\widetilde{a}=\widetilde{a}\widetilde{v}_{g_{0}%
}^{\ast}$ for all $g_{0}\in G,a\in A.$ Then,
\[
\widetilde{v}_{g_{0}}\widetilde{v}_{g_{0}}^{\ast},\widetilde{v}_{g_{0}}^{\ast
}\widetilde{v}_{g_{0}}\in A^{\prime}\cap\left(  M_{loc}(A)\times_{\alpha
}G\right)  .
\]
Standard calculations in $M_{loc}(A)\times_{\alpha}G$ show that
($\widetilde{v}_{g_{0}}\widetilde{v}_{g_{0}}^{\ast})=v(g_{0})^{\ast}%
v(g_{0})\delta_{e},$ and ($\widetilde{v}_{g_{0}}^{\ast}\widetilde{v}_{g_{0}%
})=\alpha_{g_{0}^{-1}}(v^{\ast}(g_{0})v(g_{0}))\delta_{e}$ so,
\[
\widetilde{v}_{g_{0}}\widetilde{v}_{g_{0}}^{\ast},\widetilde{v}_{g_{0}}^{\ast
}\widetilde{v}_{g_{0}}\in A^{\prime}\cap M_{loc}(A).
\]
Since A is prime, by Lemma 3 d) we have that $v(g_{0})v(g_{0})^{\ast}\in%
\mathbb{C}
1$ and $v(g_{0})^{\ast}v(g_{0})\in%
\mathbb{C}
1.$Therefore, there exists $\omega\in%
\mathbb{C}
,$ $\omega\neq0$ such that $\omega v(g_{0})$ is unitary. We can assume that
$\omega=1.$ By relation $(\ast$) above we hav%

\[
\alpha_{g_{0}}(a)=v^{\ast}(g_{0})av(g_{0}),a\in A.
\]
Hence $\alpha_{g_{0}}$ is inner in $M_{loc}(A),$\ so $g_{0}=e.$ Therefore
$v\in%
\mathbb{C}
1$ in $M_{loc}(A)\times_{\alpha}G=M_{loc}(A\times_{\alpha}G)$ and we are done.

Now, suppose that $\alpha$\ is strictly outer, i.e. $\left(  A\right)
^{\prime}\cap M_{loc}(A\times_{\alpha}G)=%
\mathbb{C}
I.$ Let $g_{0}\in G$ with $\alpha_{g_{0}}(a)=v_{0}av_{0}^{\ast}$ for a unitary
element $v_{0}\in M_{loc}(A).$ I will show that $g_{0}=e$ where $e$\ is the
unit of $G.$ If we denote $\widehat{v}_{0}=v_{0}\delta_{e}\in M_{loc}%
(A)\times_{\alpha}G$ $(=M_{loc}(A\times_{\alpha}G)$ according to Lemma 10 b))
and $\widetilde{b}=b\delta_{e}\in A\times_{\alpha}G\subset M_{loc}%
(A)\times_{\alpha}G$ for $b\in A,$\ then
\[
\widetilde{\alpha_{g_{0}}(a)}=\widehat{v}_{0}\widetilde{a}\widehat{v}%
_{0}^{\ast},a\in A.
\]
On the other hand, a standard calculation in $M_{loc}(A)\times_{\alpha}G$
gives
\[
\widetilde{\alpha_{g_{0}}(a)}=\delta_{g_{0}}\widetilde{a}\delta_{g_{0}^{-1}}.
\]
Therefore $\delta_{g_{0}^{-1}}\widehat{v}_{0}\in$ $\left(  A\right)  ^{\prime
}\cap M_{loc}(A\times_{\alpha}G)=%
\mathbb{C}
1,$ so $\widehat{v}_{0}=\mu\delta_{g_{0}}$ for some scalar $\mu.$\ Since the
support of $\widehat{v}_{0}\ $as a function on $G$\ is $\left\{  e\right\}
$\ and the support of $\delta_{g_{0}}$\ is $\left\{  g_{0}\right\}  $\ it
follows that $g_{0}=e.$ Hence every $\alpha_{g_{0}}$\ with g$_{0}\neq e$ \ is
properly outer.
\end{proof}

\bigskip

\begin{remark}
a) If $(A^{\alpha})^{\prime}\cap M_{loc}(A)=%
\mathbb{C}
I,$\ then every automorphism $\alpha_{t},t\neq e$\ is properly outer and
therefore, the action is strictly outer by the above theorem. Indeed if some
$\alpha_{g_{0}}$ \ is implemented by a unitary $u\in M_{loc}(A)$\ then,
clearly \ $u\in(A^{\alpha})^{\prime}\cap M_{loc}(A),$ so $g_{0}=e.$\newline b)
Suppose $A$\ and $A\times_{\alpha}G$\ are prime, $G$\ compact (in particular
finite) and $\alpha$\ faithful. If for every automorphism, $\beta$ \ of
$A$\ whose restriction to $A^{\alpha}$\ is the identity, there exists $g\in
G$\ such that $\beta=\alpha_{g},$\ then $(A^{\alpha})^{\prime}\cap M_{loc}(A)=%
\mathbb{C}
I.$ Indeed, if $A$\ and $A\times_{\alpha}G$\ are prime and $\alpha$\ is
faithful, by [8, Corollary 3.12] and [9, Lemma 2.2.] the hypotheses of [9,
Theorem 2.3.] are satisfied and the conclusion follows from that result.
\end{remark}

\bigskip In the next Section I will prove that, for finite abelian groups, the
condition $(A^{\alpha})^{\prime}\cap M_{loc}(A)=%
\mathbb{C}
I$\ is equivalent with the action being properly outer, and therefore strictly outer.

\subsection{Finite abelian groups}

\bigskip The proof of the following result is inspired by the proof of [5,
Theorem 4] for simple C*-algebras (see also [7, Proposition 8.10.13.]). The
hypothesis that $A$\ is simple is essential in their result.

\begin{proposition}
Let $(A,G,\alpha)$ be a C*-dynamical system with $G$\ finite abelian, $\alpha$
faithful and $A$\ prime. Then $\left(  A^{\alpha}\right)  ^{\prime}\cap
M_{loc}(A)=%
\mathbb{C}
I$\ if and only if for every $t\neq0,$ $\alpha_{t}$ is a properly outer automorphism.
\end{proposition}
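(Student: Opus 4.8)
The plan is to prove the two implications separately, with the forward direction essentially immediate and the converse carrying all the content. For the forward implication, assume $(A^{\alpha})'\cap M_{loc}(A)=\mathbb{C}I$. This is exactly the situation of Remark 12 a): if some $\alpha_{t}$ with $t\neq 0$ were implemented by a unitary $u\in M_{loc}(A)$, then $u$ would fix $A^{\alpha}$ pointwise, hence lie in $(A^{\alpha})'\cap M_{loc}(A)=\mathbb{C}I$, forcing $u$ scalar and $\alpha_{t}=id$, contrary to faithfulness. So every $\alpha_{t}$, $t\neq 0$, is properly outer and nothing more is needed.

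For the converse, suppose every $\alpha_{t}$, $t\neq 0$, is properly outer. The first step is to upgrade this to the structural conclusions of Theorem 9. Since $M_{loc}(A)^{\alpha}\subset M_{loc}(A)$, proper outerness immediately yields condition d) of Theorem 9; as $A$ is prime (hence $G$-prime) and $\alpha$ is faithful, conditions a)--c) then hold as well, so $A^{\alpha}$ is prime, the center of $M_{loc}(A)^{\alpha}$ is $\mathbb{C}I$, and $Sp(\alpha)=\Gamma(\alpha)$. Combining primeness of $A^{\alpha}$ with faithfulness and fact 5) above gives $Sp(\alpha)=\widehat{G}$, so every spectral subspace $M_{loc}(A)_{\gamma}$ is nonzero. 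Next I would take an arbitrary $x\in(A^{\alpha})'\cap M_{loc}(A)$ and decompose it spectrally under the extended action, $x=\sum_{\gamma\in\widehat{G}}x_{\gamma}$ with $x_{\gamma}=\tfrac{1}{|G|}\sum_{g\in G}\overline{\langle g,\gamma\rangle}\,\alpha_{g}(x)$. A short averaging computation, using $\alpha_{g}(a)=a$ for $a\in A^{\alpha}$, shows each $x_{\gamma}$ still commutes with $A^{\alpha}$, so $x_{\gamma}\in(A^{\alpha})'\cap M_{loc}(A)_{\gamma}$. Then $x_{\gamma}x_{\gamma}^{\ast}$ and $x_{\gamma}^{\ast}x_{\gamma}$ lie in $M_{loc}(A)^{\alpha}$ and commute with $A^{\alpha}$; by Lemma 3 b) they commute with all of $M_{loc}(A)^{\alpha}$, hence belong to its center $\mathbb{C}I$. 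Thus, whenever $x_{\gamma}\neq 0$, it is a scalar multiple of a unitary $u\in M_{loc}(A)_{\gamma}$ commuting with $A^{\alpha}$; and for $\gamma=0$ one already gets $x_{0}\in Z(M_{loc}(A)^{\alpha})=\mathbb{C}I$. The whole problem therefore reduces to excluding nonzero components with $\gamma\neq 0$.

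The heart of the argument, and the step I expect to be the main obstacle, is to show that such a unitary $u\in M_{loc}(A)_{\gamma}$ with $\gamma\neq 0$ cannot exist, by proving that $\mathrm{Ad}\,u$ must coincide with some $\alpha_{t_{0}}$, $t_{0}\neq 0$, contradicting proper outerness. By Lemma 3 b), $\mathrm{Ad}\,u$ fixes $M_{loc}(A)^{\alpha}$ pointwise, and since $\alpha_{g}(u)=\langle g,\gamma\rangle u$ it preserves each $M_{loc}(A)_{\delta}$. On $M_{loc}(A)_{\delta}$ the map $\mathrm{Ad}\,u$ is an $M_{loc}(A)^{\alpha}$-bimodule automorphism preserving both $M_{loc}(A)^{\alpha}$-valued inner products $xy^{\ast}$ and $x^{\ast}y$ (because $u$ commutes with these, which lie in $M_{loc}(A)^{\alpha}$). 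I would argue that any such inner-product-preserving bimodule automorphism acts as a scalar $\chi(\delta)\in\mathbb{T}$: the assignment $y^{\ast}x\mapsto y^{\ast}(\mathrm{Ad}\,u)(x)$ defines a central multiplier of the essential ideal $\overline{M_{loc}(A)_{-\delta}M_{loc}(A)_{\delta}}$ of $A^{\alpha}$, which, $A^{\alpha}$ being prime, must be scalar by Lemma 1 b) applied to $A^{\alpha}$. Making this scalarity precise is the delicate technical point.

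Once $\mathrm{Ad}\,u|_{M_{loc}(A)_{\delta}}=\chi(\delta)$ is established, multiplicativity $\chi(\delta+\epsilon)=\chi(\delta)\chi(\epsilon)$ follows from $M_{loc}(A)_{\delta}M_{loc}(A)_{\epsilon}\neq 0$ (again using $A^{\alpha}$ prime and $Sp(\alpha)=\widehat{G}$), so $\chi\in\widehat{\widehat{G}}=G$, say $\chi=\langle t_{0},\cdot\rangle$ for some $t_{0}\in G$, whence $\mathrm{Ad}\,u=\alpha_{t_{0}}$ on every spectral subspace and hence on $A$ by density (property 1) above). Finally $t_{0}\neq 0$, since otherwise $\mathrm{Ad}\,u=id$ would give $u\in A'\cap M_{loc}(A)=\mathbb{C}I$ by Lemma 3 d), impossible for a nonzero element of $M_{loc}(A)_{\gamma}$ with $\gamma\neq 0$. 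This contradicts proper outerness, so all components $x_{\gamma}$ with $\gamma\neq 0$ vanish, leaving $x=x_{0}\in\mathbb{C}I$ and completing the converse.
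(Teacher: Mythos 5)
Your forward implication is exactly the paper's. Your converse, however, diverges from the paper at the decisive point, and the divergence is genuine. Both arguments begin the same way: Theorem 9 converts proper outerness into primeness of $A^{\alpha}$ and triviality of the center of $M_{loc}(A)^{\alpha}$, and a spectral decomposition of $(A^{\alpha})'\cap M_{loc}(A)$ reduces the problem to ruling out a unitary $u$ commuting with $A^{\alpha}$ and satisfying $\alpha_{g}(u)=\langle g,\gamma\rangle u$ with $\gamma\neq 0$. From there the paper never tries to identify $\mathrm{Ad}\,u$: it passes to the subgroup $H=\{t\in G:\langle t,\gamma\rangle =1\}$, observes that $(M_{loc}(A)^{H})_{n\gamma}\subset M_{loc}(A)^{\alpha}u^{n}$, so that $u$ (which commutes with $M_{loc}(A)^{\alpha}$ by Lemma 3 b)) is \emph{central} in $M_{loc}(A)^{H}$, and then applies Theorem 9 a second time, to the system $(A,H,\alpha|_{H})$, to produce some $t_{0}\in H$, $\alpha_{t_{0}}\neq id$, implemented by a unitary in $M_{loc}(A)^{H}$ --- the contradiction. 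You instead try to prove the stronger statement that $\mathrm{Ad}\,u$ acts as a character $\chi(\delta)$ on each spectral subspace and hence equals some $\alpha_{t_{0}}$ by Pontryagin duality; this is closer to the Olesen--Pedersen--Stormer argument for simple algebras that the paper cites as its inspiration, and it trades the paper's second invocation of Theorem 9 for a multiplier analysis.

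That multiplier analysis is precisely the step you flag as delicate, and it is a genuine gap: the justification you sketch does not work as stated. First, $\overline{M_{loc}(A)_{-\delta}M_{loc}(A)_{\delta}}$ is a closed two-sided ideal of $M_{loc}(A)^{\alpha}$, \emph{not} of $A^{\alpha}$ (its elements need not even lie in $A^{\alpha}$), so Lemma 1 applied to the prime algebra $A^{\alpha}$ says nothing about central multipliers of this ideal. What is actually needed is that the center of $M\bigl(\overline{M_{loc}(A)_{-\delta}M_{loc}(A)_{\delta}}\bigr)$ is trivial, and for that one needs $M_{loc}(A)^{\alpha}$ itself to be prime --- a fact the paper never establishes (Theorem 9 gives only trivial center, which is weaker). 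It is provable: by Lemma 3 a), $A^{\alpha}\subset M_{loc}(A)^{\alpha}\subset M_{loc}(A^{\alpha})$, and for nonzero $p,q\in M_{loc}(A)^{\alpha}$ one can pick $j_{1},j_{2}$ in suitable essential ideals of $A^{\alpha}$ with $pj_{1}\neq 0\neq j_{2}q$; then $pj_{1},\,j_{2}q\in A^{\alpha}$ and primeness of $A^{\alpha}$ gives $pj_{1}A^{\alpha}j_{2}q\neq\{0\}$, hence $pM_{loc}(A)^{\alpha}q\neq\{0\}$. Second, you cannot evade this by replacing the ideal with the honest ideal $\overline{A_{-\delta}A_{\delta}}$ of $A^{\alpha}$ (where Lemma 1 c) would apply), because then the map $y^{*}x\mapsto y^{*}(uxu^{*})$ no longer takes values in that ideal or its multiplier algebra, and the double-centralizer argument collapses; the $M_{loc}$-level ideal is forced on you exactly because it absorbs multiplication by $M_{loc}(A)^{\alpha}$. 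Third, two computations still have to be done: well-definedness and isometry of the map on finite sums, which follows by expanding $\bigl(\sum_{i}y_{i}^{*}ux_{i}u^{*}\bigr)\bigl(\sum_{j}y_{j}^{*}ux_{j}u^{*}\bigr)^{*}$ and using that each $x_{i}x_{j}^{*}$ lies in $M_{loc}(A)^{\alpha}$ and so commutes with $u$; and the upgrade from the weak identity $y^{*}(uxu^{*}-\chi(\delta)x)=0$ for all $y\in M_{loc}(A)_{\delta}$ to the pointwise identity $uxu^{*}=\chi(\delta)x$, which one gets by taking $y=uxu^{*}-\chi(\delta)x\in M_{loc}(A)_{\delta}$.

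So your route can be completed, and it even yields more than the paper's proof does (an explicit identification $\mathrm{Ad}\,u=\alpha_{t_{0}}$, rather than the bare existence of some non-properly-outer $\alpha_{t_{0}}$), but as written the proposal is missing its central step, and the one-line argument offered for it applies Lemma 1 to an ideal that does not live inside $A^{\alpha}$. It is worth recognizing that the paper's $H$-subgroup device exists precisely to avoid this difficulty: centrality of $u$ in $M_{loc}(A)^{H}$ is immediate from the factorization of the spectral subspaces of $M_{loc}(A)^{H}$, and Theorem 9 then does all the remaining work with no multiplier theory at all.
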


\begin{proof}
Suppose $\left(  A^{\alpha}\right)  ^{\prime}\cap M_{loc}(A)=%
\mathbb{C}
I$. If $\alpha_{t_{0}}$ is implemented by a unitary $u_{t_{0}}$\ in
$M_{loc}(A)$, then, clearly, $u_{t_{9}}\in$ $\left(  A^{\alpha}\right)
^{\prime}\cap M_{loc}(A)=%
\mathbb{C}
I.$ Conversely suppose that $\alpha_{t}$\ is properly outer for every $t\in
G,t\neq0.$ According to Theorem 9, the center of $M_{loc}(A)^{\alpha}$\ is trivial.

Let $B=\left(  A^{\alpha}\right)  ^{\prime}\cap M_{loc}(A).$ Since the center
of $M_{loc}(A)^{\alpha}$ is trivial, from Lemma 3 b) it follows that the fixed
point algebra of the action $\alpha$\ restricted to the $\alpha$-invariant
subalgebra $B$ is trivial. Suppose that $B\neq%
\mathbb{C}
I.$ Then, there exists $\gamma_{0}\in\widehat{G}$\ such that $B_{\gamma_{0}%
}\neq\left\{  0\right\}  .$ If $x,y\in B_{\gamma_{0}},$ then $x^{\ast}y\in%
\mathbb{C}
I$ \ (the fixed point algebra of the restriction of $\alpha$\ to $B$).
Therefore there exists a unitary, $u\in B_{\gamma_{0}}$, such that
$B_{\gamma_{0}}=%
\mathbb{C}
u.$ Since $u$\ is unitary, it follows that $n\gamma_{0}\in Sp(\alpha|_{B})$
for every $n\in%
\mathbb{Z}
,$ so $B_{n\gamma_{0}}=%
\mathbb{C}
u^{n}$ for every $n\in%
\mathbb{Z}
.$

Let $H=\left\{  t\in G:\left\langle t,\gamma_{0}\right\rangle =1\right\}  .$
Then, since $\alpha_{t}(u)=\left\langle t,\gamma_{0}\right\rangle u$ for every
$t\in G,$\ it follows that $\alpha_{t}(u)=u$ for all $t\in H.$ Notice that the
annihilator, $H^{\perp},$ of $H$\ in $\widehat{G}$ equals $%
\mathbb{Z}
\gamma_{0}.$ Now, consider the C*-subalgebra
\[
M_{loc}(A)^{H}=\left\{  x\in M_{loc}(A):\alpha_{t}(x)=x\text{ for all }t\in
H\right\}  .
\]
By [7, 8.10.3.], if $\widetilde{\alpha}$ is the action of $G/H$ on
$M_{loc}(A)^{H},$ we have $Sp(\widetilde{\alpha})\subset%
\mathbb{Z}
\gamma_{0}.$ Therefore, the spectral subspaces $(M_{loc}(A)^{H})_{n\gamma_{0}%
}$ have dense span in $M_{loc}(A)^{H}.$ Next, we will prove that $u$\ commutes
with $(M_{loc}(A)^{H})_{n\gamma_{0}},n\in%
\mathbb{Z}
$ and thus $u$\ belongs to the center of $M_{loc}(A)^{H}.$ Let $x\in
(M_{loc}(A)^{H})_{n\gamma_{0}}\subset(M_{loc}(A))_{n\gamma_{0}}$\ Then
\[
x=x(u^{\ast})^{n}u^{n}\in(M_{loc}(A))^{\alpha}u^{n}.
\]
On the other hand, since $u\in B=(A^{\alpha})^{\prime}\cap M_{loc}(A),$ in
particular $u\in(A^{\alpha})^{\prime}.$\ applying Lemma 3 b) again,\ it
follows that $u$\ commutes with $M_{loc}(A)^{\alpha}$ , so $u$ commutes with
$x.$ Hence $u$\ belongs to the center of $M_{loc}(A)^{H}.$ Thus the center of
$M_{loc}(A)^{H}$ contains a non scalar element. Since, by hypothesis, $A$ is
prime, in particular $H$-prime,\ applying Theorem 9 for $G=H$, it follows that
there exists $t_{0}\in H$\ such that $\alpha_{t_{0}}$ is implemented by a
unitary in $M_{loc}(A)^{H},$ thus $\alpha_{t_{0}}$\ is not properly outer,
which is a contradiction.
\end{proof}

\bigskip

As stated before, all the results obtained in this paper, hold for not
necessarily separable C*-algebras. The next theorem was considered in [2] for
compact groups and separable C*-algebras. In addition to separability, in the
equivalence of a) and b) below the authors make the assumption that
$A^{\alpha}$ is prime.

\bigskip

\begin{theorem}
Let $(A,G,\alpha)$\ be a C*-dynamical system with $A$\ prime, $G$\ finite
abelian and $\alpha$\ faithful. The following conditions are
equivalent\newline a) $\alpha_{g}$ is properly outer for every $g\in
G,g\neq0.$\newline b) $(A^{\alpha})^{\prime}\cap M_{loc}(A)=%
\mathbb{C}
I.$\newline c) $(A)^{\prime}\cap M_{loc}(A\times_{\alpha}G)=%
\mathbb{C}
I.$\newline d) $A^{\alpha}$\ is prime and $\widehat{\alpha}_{\gamma}$\ is
properly outer (in $M_{loc}(A\times_{\alpha}G))$ for every $\gamma
\in\widehat{G},\gamma\neq0.$
\end{theorem}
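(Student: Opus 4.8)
The plan is to treat (a), (b), (c) as already in hand and to attach (d) by passing to the dual dynamical system. Indeed, (a)$\Leftrightarrow$(b) is precisely Proposition 13, and (a)$\Leftrightarrow$(c) is Theorem 11, since (c) is the defining condition of strict outerness in Definition 6 b). So it remains only to prove, say, (c)$\Rightarrow$(d) and (d)$\Rightarrow$(a); together with the two equivalences above these close the cycle and yield the full statement.

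The device for handling (d) is the dual system $(A\times_{\alpha}G,\widehat{G},\widehat{\alpha})$. Here $\widehat{G}$ is again finite abelian, the dual action $\widehat{\alpha}$ is faithful (the pairing $\langle g,\gamma\rangle$ is nondegenerate) and extends to $M_{loc}(A\times_{\alpha}G)$ since $\widehat{G}$ is finite, and the key structural identity is $(A\times_{\alpha}G)^{\widehat{\alpha}}=A$ (an $\widehat{\alpha}$-fixed element $\sum a_{g}\delta_{g}$ forces $a_{g}=0$ for $g\neq e$). For (c)$\Rightarrow$(d) I first observe that the center of $M_{loc}(A\times_{\alpha}G)$ is contained in $A'\cap M_{loc}(A\times_{\alpha}G)=\mathbb{C}I$, so Lemma 1 shows that $A\times_{\alpha}G$ is prime. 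I may therefore apply Proposition 13 to the dual system: it gives that $((A\times_{\alpha}G)^{\widehat{\alpha}})'\cap M_{loc}(A\times_{\alpha}G)=A'\cap M_{loc}(A\times_{\alpha}G)$ is trivial if and only if every $\widehat{\alpha}_{\gamma}$, $\gamma\neq 0$, is properly outer, whence (c) yields the proper outerness of $\widehat{\alpha}$. The primeness of $A^{\alpha}$ follows from (a)$=$(c), for no $\alpha_{g}$ ($g\neq e$) is inner in $M_{loc}(A)$, a fortiori none is inner in $M_{loc}(A)^{\alpha}$, and Theorem 9 then gives that $A^{\alpha}$ is prime.

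For (d)$\Rightarrow$(a) I intend to argue directly. Suppose some $\alpha_{g_{0}}=Ad\,u$ with $g_{0}\neq e$ and $u\in M_{loc}(A)$ unitary. Since $G$ is abelian, each $\alpha_{h}(u)$ again implements $\alpha_{g_{0}}$, so $u^{\ast}\alpha_{h}(u)$ lies in the center of $M_{loc}(A)$, which is $\mathbb{C}I$ because $A$ is prime; hence $\alpha_{h}(u)=\langle h,\lambda\rangle u$ for a character $\lambda\in\widehat{G}$. If $\lambda=0$ then $u\in M_{loc}(A)^{\alpha}$ implements $\alpha_{g_{0}}$, contradicting the primeness of $A^{\alpha}$ via Theorem 9. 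If $\lambda\neq 0$, I set $w=u^{\ast}\delta_{g_{0}}\in M_{loc}(A\times_{\alpha}G)$ (using Lemma 5 to view $u\in M_{loc}(A)\subset M_{loc}(A\times_{\alpha}G)$). A short computation shows $w$ is unitary with $waw^{\ast}=u^{\ast}\alpha_{g_{0}}(a)u=a$ for $a\in A$ and $w\delta_{h}w^{\ast}=u^{\ast}\alpha_{h}(u)\delta_{h}=\langle h,\lambda\rangle\delta_{h}$, so that $Ad\,w=\widehat{\alpha}_{\lambda}$ on the generators of $A\times_{\alpha}G$. Thus $\widehat{\alpha}_{\lambda}$ is inner in $M_{loc}(A\times_{\alpha}G)$, contradicting the proper outerness of $\widehat{\alpha}$ in (d). Either way (d) is contradicted, so (d)$\Rightarrow$(a).

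The step I expect to be the crux is the appearance of primeness of $A\times_{\alpha}G$: Proposition 13 applied to the dual system requires it, yet it is not among the hypotheses. The resolution is the observation that (c) traps the center of $M_{loc}(A\times_{\alpha}G)$ inside $A'\cap M_{loc}(A\times_{\alpha}G)$ and so forces primeness, and, dually, that the ``$A^{\alpha}$ prime'' half of (d) is exactly what excludes the case $\lambda=0$ in the direct argument while the other half excludes $\lambda\neq 0$. The remaining ingredients, namely the identification $(A\times_{\alpha}G)^{\widehat{\alpha}}=A$, the faithfulness of $\widehat{\alpha}$, and the covariance computation $Ad\,w=\widehat{\alpha}_{\lambda}$, are standard crossed-product bookkeeping that I would verify but not belabor.
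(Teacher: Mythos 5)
Your proposal is correct, and its skeleton matches the paper's: a)$\Leftrightarrow$b) comes from Proposition 13, a)$\Leftrightarrow$c) from Theorem 11, and d) is attached through the dual system $(A\times_{\alpha}G,\widehat{G},\widehat{\alpha})$ with the identity $(A\times_{\alpha}G)^{\widehat{\alpha}}=A$, using that c) together with Lemma 1 forces $A\times_{\alpha}G$ to be prime. Where you genuinely diverge is in how the two implications involving d) are closed. In c)$\Rightarrow$d) the paper gets proper outerness of $\widehat{\alpha}$ by a one-line argument (a unitary implementing $\widehat{\alpha}_{\gamma}$ commutes with the fixed-point algebra $A$, hence lies in $A^{\prime}\cap M_{loc}(A\times_{\alpha}G)=\mathbb{C}I$) and gets primeness of $A^{\alpha}$ from the external reference [8, Corollary 3.13] ($A\times_{\alpha}G$ prime implies $A^{\alpha}$ prime); you instead obtain the first by applying Proposition 13 to the dual system and the second from Theorem 9 (its implication d)$\Rightarrow$b), fed by statement a)). For the return direction the paper proves d)$\Rightarrow$c): primeness of $A^{\alpha}$ and faithfulness give $Sp(\alpha)=\widehat{G}$, then [8, Corollary 3.13] gives $A\times_{\alpha}G$ prime, and Proposition 13 on the dual system finishes. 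You instead prove d)$\Rightarrow$a) directly: from $\alpha_{g_{0}}=Ad\,u$ and $A^{\prime}\cap M_{loc}(A)=\mathbb{C}I$ (Lemma 3 d)) you extract $\lambda\in\widehat{G}$ with $\alpha_{h}(u)=\langle h,\lambda\rangle u$; the case $\lambda=0$ contradicts primeness of $A^{\alpha}$ via Theorem 9, while $\lambda\neq0$ produces the explicit unitary $w=u^{\ast}\delta_{g_{0}}\in M_{loc}(A\times_{\alpha}G)$ (legitimized by Lemmas 5 and 10) with $Ad\,w=\widehat{\alpha}_{\lambda}$, contradicting proper outerness of the dual action. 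Your route buys two things: the theorem becomes self-contained relative to the paper's own earlier results (no appeal to [8, Corollary 3.13] in either direction), and your d)$\Rightarrow$a) never needs $A\times_{\alpha}G$ to be prime, whereas the paper's d)$\Rightarrow$c) must first establish that. What the paper's route buys is brevity and, as a by-product recorded inside the proof, the fact that condition d) forces $A\times_{\alpha}G$ to be prime. Both arguments validly close the equivalence cycle.
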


\begin{proof}
\bigskip The equivalence of a) and b) follows from Proposition 13. The
equivalence of a) and c) follows from Theorem 11 in the particular case of
abelian groups.

c)$\Rightarrow$d)$,$ Notice first that from c) it follows that the center of
$M_{loc}(A\times_{\alpha}G)$\ consists of scalars. Therefore, by Lemma 1,
$A\times_{\alpha}G$\ is prime, so, by [8, Corollkary 3.13], $A^{\alpha}$\ is
prime. If for some $\gamma\in\widehat{G},\widehat{\alpha}_{\gamma}$ is
implemented by a unitary $u$ in $M_{loc}(A\times_{\alpha}G),$ then, as
$(A\times_{\alpha}G)^{\widehat{\alpha}}=A,$\ it follows that $u\in(A)^{\prime
}\cap M_{loc}(A\times_{\alpha}G)=%
\mathbb{C}
I.$ Hence $\gamma=0.$

d)$\Rightarrow$c) If $A^{\alpha}$\ is prime and $\alpha$\ is faithful, from
property 5) in the list of properties of $Sp(\alpha)$ discussed after
Definition 6, Section 2, it follows that $Sp(\alpha)=\widehat{G}.$\ Applying
[8 Cor 3.13.] it follows that $A\times_{\alpha}G$ is prime. Therefore
$A\times_{\alpha}G$\ is prime and $\widehat{\alpha}_{\gamma}$\ is properly
outer (in $M_{loc}(A\times_{\alpha}G))$ for every $\gamma\in\widehat{G}%
,\gamma\neq0.$ Since $(A\times_{\alpha}G)^{\widehat{\alpha}}=A,$ from
Proposition 13 applied to the C*-dynamical system $(A\times_{\alpha
}G,\widehat{G},\widehat{\alpha})$ (instead of $(A,G,\alpha)$) it follows that
$(A)^{\prime}\cap M_{loc}(A\times_{\alpha}G)=%
\mathbb{C}
I.$
\end{proof}

\bigskip

\begin{center}
{\LARGE References}
\end{center}

\bigskip

[1] P. Ara and M.Mathieu, \textit{Local Multipliers of C*-Algebras, }Springer, 2003.

[2] O. Bratteli, G. A. Elliott, D. E. Evans and A. Kishimoto,
\textit{Quasi-Product Actions of a Compact Abelian Group on a C*-Algebra,
}Tohoku J. of Math., 41 (1989), 133-161.

[3] R. Dumitru and C. Peligrad, \textit{Compact Quantum Actions on C*-Algebras
and Invariant Derivations, }Proceedings of the Amer. Math. Soc., vol. 135,
Nr.12 (2007), 3977-3984.

[4] D. Olesen, \textit{Inner *-Automorphisms of Simple C*-Algebras, }Commun.
math. Phys. 44, (1975), 175---190.

[5] D. Olesen, G. K. Pedersen and E. Stormer, \textit{Compact Abelian Groups
of Automorphisms of Simple C*-Algebras, }lnventiones Math. 39 (1977), 55--64.

[6] G. K. Pedersen, \textit{Approximating Derivations on Ideals of
C*-Algebras}, Inventiones Math. 45 (1978), 299-305.

[7] G. K. Pedersen, \textit{C*-Algebras and Their Automorphism Groups, }Second
edition, Academic Press, 2018.

[8] C. Peligrad, \textit{Locally Compact Group Actions on C*-Algebras and
Compact Subgroups, }J. Functional Anal. 76, Nr. 1 (1988), 126-139.

[9] C. Peligrad, \textit{Duality for Compact Group Actions on Operator
Algebras and Applications: Irreducible Inclusions and Galois Correspondence,}
Internat. J. Math. 31, Nr. 9 (2020), 12 pp.

[10] M. Rieffel, \textit{Actions of Finite Groups on C*-Algebras, }Math.
Scand., 47 (1980), 157-176.

[11] S. Vaes, \textit{The Unitary Implementation of a Locally Compact Group
Action, }J. Functional Anal. 180 (2001), 426-480.
\end{document}